\documentclass[12pt,reqno,b5paper]{amsart}%
\usepackage{anysize}
\marginsize{13mm}{13mm}{10mm}{10mm}
\usepackage[noadjust]{cite}
\usepackage[T1]{fontenc}
\usepackage{txfonts,amsmath,esint}
\usepackage[colorlinks,urlcolor=black,citecolor=blue,linkcolor=blue]{hyperref}
\usepackage{amsmath}
\usepackage{amsfonts}
\usepackage{amssymb}
\usepackage{graphicx}

\DeclareSymbolFont{CM}{OMX}{cmex}{m}{n}
\DeclareMathSymbol{\sumop}{\mathop}{CM}{"50}
\renewcommand{\sum}{\sumop}

\allowdisplaybreaks[4]

\let\oldsection\section
\renewcommand\section{\setcounter{equation}{0}\oldsection}

\catcode`!=11
\let\!int\int \def\int{\displaystyle\!int}
\let\!lim\lim \def\lim{\displaystyle\!lim}
\let\!sum\sum \def\sum{\displaystyle\!sum}
\let\!sup\sup \def\sup{\displaystyle\!sup}
\let\!inf\inf \def\inf{\displaystyle\!inf}
\let\!max\max \def\max{\displaystyle\!max}
\let\!min\min \def\min{\displaystyle\!min}

\catcode`!=12

\allowdisplaybreaks
\newcommand{\RNum}[1]{\uppercase\expandafter{\romannumeral #1\relax}}

\newtheorem{thm}{Theorem}[section]
\newtheorem{prop}{Proposition}[section]
\newtheorem{lem}{Lemma}[section]

\newtheorem{rem}{Remark}[section]
\def\pf{\it{Proof.}\rm\quad}
\newcommand{\be}{\begin{equation}}
\newcommand{\ee}{\end{equation}}
\newcommand{\bes}{\begin{eqnarray*}}
\newcommand{\ees}{\end{eqnarray*}}
\newcommand{\md}{\mathrm{d}}
\newcommand{\na}{\nabla}

\newcommand{\dg}{\mathrm{div}}
\newcommand{\cl}{\mathrm{curl}}
\def\R{\mathbb{R}}

\newcommand{\bea}{\begin{aligned}}
\newcommand{\eea}{\end{aligned}}
\newcommand{\br}{\bar{\rho}}
\newcommand{\bx}{\bar{\xi}}
\allowdisplaybreaks

\begin{document}
\title[Global existence of large strong solutions to the micropolar equations]
{Global existence of large strong solutions to the 3D compressible micropolar equations with density-dependent viscosities}\thanks{Xu's research was supported by the NSFC (12371238)}.
\author{Tiantian Liu$^1$,\quad Xinying Xu$^{2}$}
\thanks{E-mails:liutiantian\_309@163.com (T. Liu); Corresponding author, xinyingxu@xmu.edu.cn (X. Xu)}
\dedicatory{1.School of Mathematical Sciences, Xiamen University\\ Xiamen 361005, P. R. China\\
2. School of Mathematical Sciences, Xiamen University\\ Xiamen 361005, P. R. China}
\vspace{-5mm}
\begin{abstract}
In this paper, we consider the Cauchy problem for the three-dimensional (3D) compressible micropolar equations with density-dependent viscosities. By classical $L^p$ estimates to elliptic system, defining suitable energy functions, after the elaborate index analysis, we establish the global existence of strong solutions to the system with all viscosities depending density as long as the initial density is large enough. By product, We also prove the existence of global large strong solutions with only large initial shear viscosity $\mu_1(\rho)$ and angular viscosity $\mu_2(\rho)$, which consists with the research work \cite{ZZ2014} of 3D compressible Navier-Stokes equations with constant viscosities and improve the recent result of \cite{HLZ} in some extent where they requiring both initial shear viscosity and bulk viscosity to be large.

\vskip 2mm
\noindent{Keywords.} compressible micropolar equations; density-dependent viscosities; global large strong solution.
\vskip 2mm
\noindent{AMS Subject Classifications (2000). 35Q35, 35B65, 76N10, 35M10.}
\end{abstract}
\maketitle
\section{Introduction}
Micro fluid model introduced by Eringen in the 1960s deals with a class of fluids which exhibit certain microscopic effects arising from the local structure and micro-motions of the fluid elements. A subclass of these fluids is the micropolar fluids which exhibit the micro-rotational effects and micro-rotational inertia and may represent fluids that consist of rigid, randomly oriented (or spherical particles) suspended in a viscous medium in physics, where the deformation of fluid particles is ignored. Micropolar fluid equations can describe many phenomena that appear in a large number of certain complex anisotropic fluids such as the suspensions, and animal blood, which cannot be characterized appropriately by the Navier-Stokes system. The detailed derivation of the micropolar fluid equations from the general constitutive laws, together with an extensive review of the mathematical theory and the applications of this particular model, can be founded in the monographs \cite{Eringen1966, Lg1999, KLL2019, KL2020} and references therein.

In this paper, we consider the  three-dimensional compressible micropolar fluids with density-dependent viscosities introduced in \cite{Eringen1966} as follows:
\be\label{1.1}
\left\{
\begin{aligned}
&\rho_t + \dg(\rho u) = 0,\\
&(\rho u)_t+\dg(\rho u \otimes u)+\nabla P=\dg\left(2(\mu_1(\rho)+ \xi(\rho)) d(u)\right)\\
&\qquad\qquad\qquad+\nabla\left(\lambda_1(\rho)\dg u\right)-\cl(\xi(\rho)\cl u)+2\cl\left(\xi(\rho) w\right),
\\
&(\rho w)_t+\dg(\rho u \otimes w)+4\xi(\rho)w=\dg\left((\mu_2(\rho)\nabla w\right)\\
&\qquad\qquad\qquad\qquad\qquad+\nabla \left(\mu_2(\rho)+\lambda_2(\rho)) \dg w\right)+2\xi(\rho)\cl u,
\end{aligned}
\right.
\ee
where $t\ge 0$ is time, $x\in \R^3$ is the spatial coordinate, and unknown functions $\rho=\rho(x,t),$ $u=(u^1,u^2,u^3)(x,t),$ $w=(w^1,w^2,w^3)(x,t)$  are the fluid density, velocity, micro-rotational and velocity, respectively. The pressure $P$ is given by
\begin{equation}\label{1.2}
P=A\rho^\gamma,\,\,A>0,\,\,\gamma>1,
\end{equation}
and the deformation tensor is defined by
\begin{equation}\label{d}
 d (u) \triangleq \frac{1}{2} \left[\nabla u + (\nabla u)^\top \right].
\end{equation}
The constants $\mu_1(\rho))=\mu_1\rho^\alpha$ and $\lambda_1(\rho)=\lambda_1\rho^\alpha$ are the shear and bulk viscosity coefficients of the flow and satisfy the physical restrictions
\begin{equation}\label{1.3}
\mu_1>0,\,\,2\mu_1+3\lambda_1\geq 0,
\end{equation}
$\xi(\rho)=\bar{\xi}\rho^\beta\,(\bar{\xi}>0)$ is the dynamics microrotation viscosity, $\mu_2(\rho))=\mu_2\rho^\alpha$ and $\lambda_2(\rho)=\lambda_2\rho^\alpha$
are the angular viscosities satisfying
\begin{equation}\label{1.4}
\mu_2>0,\,\,2\mu_2+3\lambda_2\geq 0,
\end{equation}
where $\alpha>0$ and $\beta\geq 0$.

We consider the Cauchy problem of (\ref{1.1})--(\ref{1.4}) with the following initial data and far-field condition:
\begin{equation}\label{1.5}\begin{aligned}
&(\rho,\,u,\,w)(x,0)=(\rho_0,\,u_0,\, w_0)(x),\quad x\in \R^3,\\
&(\rho-\bar{\rho},\,u,\,w)(t,x)\rightarrow (0,0,0),\quad as\,\, |x|\rightarrow\infty,\,\,t\geq 0.
\end{aligned}
\end{equation}

When all the viscosities including shear, bulk viscosities, dynamics microrotation viscosity and angular viscosities are positive constants $(\alpha=0,\,\beta=0)$ and allows vacuum,  Chen et al. \cite{CXZ2015} obtained the global existence of week solution to the Cauchy problem for the 3D compressible micropolar equations with small initial data.

When there is no microstructure ($w=0$ and $\bx=0$), the micropolar equations reduces to Navier-Stokes equations. For the constant viscosities case, the global strong solutions is obtained if the viscosity coefficient is large enough in \cite{DZZ2012}. For the density-dependent viscosities case, Yu \cite{Yu2025} and Huang et al. \cite{HLZ} proved the global existence of strong solutions as long as the initial density is large enough, Xin and Zhu \cite{XZ2021} obtained the large global solution of Navier-Stokes equations under certain assumptions on $\alpha,\,\beta$ and $\gamma$. We will review some well-posedness results of 3D micropolar equations when viscosities depend on the density. For the micropolar system with density-dependent viscosities, Zhong \cite{Z2023} proved the global existence and exponential stability of strong solutions of Cauchy problem to 3D compressible micropolar equations, Si et al. \cite{SXZ2025} obtained the global well--posedness of solutions to the 3D initial boundary value problem of  micropolar-MHD equations when the viscosities coefficient are $C^1$ functions of the density under small initial data. It is obvious the results of \cite{SXZ2025} are also true to the 3D Cauchy problem of micropolar equations when the viscosities coefficient are $C^1$ functions of the density under small initial data. For the coefficients are power functions of the density, Lu and Qiao \cite{LQ2025} established the global existence of strong solutions to the initial boundary value problem of 3D inhomogeneous incompressible micropolar equations with large initial density.

Without lose of generality, we explain some notations and conventions, which will be used frequently in this paper. Firstly, denote
\begin{equation*}
\int f(x) \md x\triangleq \int_{\R^3} f(x) \md x.
\end{equation*}
Next, for $1\leq p\leq\infty$ and $k\in Z^+$, the following standard homogenous Sobolev spaces are defined as follows:
\begin{equation*}
\begin{cases}
L^p\triangleq L^p(\R^3),\quad W^{k,p}\triangleq W^{k,p}(\R^3), \quad H^k \triangleq W^{k,2},\\[2mm]
D^{k,p}\triangleq D^{k,p}(\R^3) \triangleq\left\{u \in L_{\rm loc}^1~|~\nabla^k{u} \in L^p\right\},\quad D^k\triangleq D^{k,2}.
\end{cases}
\end{equation*}

Our main result can be stated as follows:
\begin{thm}\label{th1}
Let $\Omega=\mathbb{R}^3$ and $\br$ be a constant. Suppose that the initial data $(\rho_0,\,u_0,\,w_0)$ satisfies
\begin{equation}\label{1.6}
\frac{3}{4}\br\leq \rho_0\leq\frac{5}{4}\br,\,\, \rho_0-\br\in D^{1,2}\cap D^{1,q},\quad 3<q<6,\,\, G(\rho_0)\in L^1,\,\, u_0\in H^2,\,\, w_0\in H^2.
\end{equation}
Let $\alpha,\,\beta$ and $\gamma$ satisfy
\begin{equation}\label{1.7}
\alpha>1,\quad 0\leq\beta< \alpha,\quad \gamma>1,
\end{equation}
and
\begin{equation}\label{1.8}
\alpha>\max \left\{\frac{\gamma+1}{2},\,\gamma-1\right\}.
\end{equation}
Then there exists a positive constant
$$\Lambda_0=\Lambda_0\left(A,\,\mu_1,\,\lambda_1,\,\mu_2,\,\lambda_2,\,\bx,\,\alpha,\,\beta,\,\gamma,\,\|G(\rho_0)\|_{L^1},\,\|\na \rho_0\|_{L^2},\,\|\na \rho_0\|_{L^q},\,\|u_0\|_{H^2},\,\|w_0\|_{H^2}\right)$$
such that if
\begin{equation}\label{1.9}
\br\geq \Lambda_0,
\end{equation}
then the problem (\ref{1.1})--(\ref{1.5}) admits a unique global solution $(\rho,u,w)$ in $\mathbb{R}^3\times(0,\infty)$ satisfying
\begin{equation}\label{1.10}
\frac{2}{3}\br\leq \rho_0\leq\frac{4}{3}\br
\end{equation}
and
\be\left\{
\begin{aligned}
&\rho\in C([0,\infty);D^{1,2}\cap D^{1,q}),\quad \rho_t\in C([0,\infty);L^q),\\
&\na u\in C([0,\infty);H^1)\cap L^2((0,\infty);W^{1,q}),\\
&u_t\in L^\infty((0,\infty);L^2)\cap L^2((0,\infty);H^1),\\
& w\in C([0,\infty);H^2)\cap L^2((0,\infty);W^{2,q}),\\
&w_t\in L^\infty((0,\infty);L^2)\cap L^2((0,\infty);H^1).\label{1.11}
\end{aligned}\right.
\ee
\end{thm}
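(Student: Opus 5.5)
We argue by the standard continuation method: local existence plus uniform a priori estimates. Local well-posedness of strong solutions in the class (\ref{1.11}) follows from the usual linearization and fixed-point argument, since $\rho_0$ is bounded away from zero and the viscosities are smooth in $\rho$ near $\br$. The core is a bootstrap proposition. Suppose that on $[0,T]$
\[
\tfrac{1}{2}\br\le\rho\le\tfrac{3}{2}\br,\qquad
\|\na\rho\|_{L^2}+\|\na\rho\|_{L^q}\le 2M_0,\qquad
E(T)\le 2K_0,
\]
where $E$ collects the weighted energies described below. We then show that, for $\br\ge\Lambda_0$, the same bounds hold with $2$ replaced by $1$ and with the sharper density bound $\frac23\br\le\rho\le\frac43\br$. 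Since none of the constants depends on $T$, global existence follows, and uniqueness is a standard $L^2$ stability estimate.

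The estimates come in the following order.

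\textbf{(i) Basic energy.} We test with $(u,w)$ and use the potential $G(\rho)$ relative to $\br$. The identity $-\cl\cl u=\Delta u-\na\dg u$ lets us absorb the coupling $2\cl(\xi w)\cdot u+2\xi\cl u\cdot w-4\xi|w|^2$ into the dissipation; it is a perfect square. Since $\rho\approx\br$, the dissipation is of size $\br^\alpha\|\na u\|_{L^2}^2+\br^\alpha\|\na w\|_{L^2}^2+\br^\beta\|\cl u-2w\|^2_{L^2}$, while the kinetic energy is of size $\br$.

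\textbf{(ii) First-order estimates.} We test the momentum equation with $u_t$ and the angular equation with $w_t$, then differentiate in time and test with $u_t,w_t$. The viscous terms containing $\na\rho$ are written as $\rho^{\alpha-1}\na\rho\otimes\na u$. They are controlled by $\br^{\alpha-1}\|\na\rho\|_{L^q}\|\na u\|_{L^{q/(q-2)}}$, and this is absorbed using the large prefactor $\br^\alpha$ in the dissipation.

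\textbf{(iii) Second-order estimates.} We rewrite the momentum equation as a Lamé system with constant coefficients $\br^\alpha(\mu_1,\lambda_1)$ plus perturbations. Classical $L^2$ and $L^q$ elliptic estimates then give
\[
\br^{\alpha}\|\na^2u\|_{L^p}\lesssim \br\|u_t\|_{L^p}+\br\|u\cdot\na u\|_{L^p}+\br^{\gamma-1}\|\na\rho\|_{L^p}+\br^{\alpha-1}\|\na\rho\|\,\|\na u\|+\br^\beta\|\na w\|_{L^p}+\cdots.
\]
The analogous estimate holds for $w$, with the damping $4\xi w$ on the left. The condition $\beta<\alpha$ is exactly what makes the microrotation coupling a lower-order perturbation.

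\textbf{(iv) Density gradient.} From $\partial_t\na\rho+u\cdot\na\na\rho+\na u\,\na\rho+\rho\na\dg u+\na\rho\,\dg u=0$ we obtain
\[
\frac{d}{dt}\|\na\rho\|_{L^p}\lesssim\|\na u\|_{L^\infty}\|\na\rho\|_{L^p}+\br\|\na^2u\|_{L^p}.
\]
The time integral of the right-hand side is controlled through (iii) by a factor $\br^{1-\alpha}$ times bounded quantities. The pressure contribution is $\br^{\gamma-\alpha}\|\na\rho\|$ in the damped sense, and time integrability of $\|\na\rho\|_{L^2}$ comes from testing the momentum equation with $\na\rho$, the "effective viscous flux" trick. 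For the density bound we use $|\rho-\rho_0|\le\int_0^T\br\|\dg u\|_{L^\infty}$, which is small relative to $\br$ for large $\br$.

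\textbf{Main obstacle: the index analysis.} Each bootstrap quantity must close with a net negative power of $\br$. This means normalizing the energies (for instance $\br^{-\alpha}$ times a first-order energy) and tracking exponents such as $\br^{1+\gamma-2\alpha}$ from the pressure–velocity interaction, which requires $\alpha>\frac{\gamma+1}{2}$. Similarly, the term $\br^{\gamma-1-\alpha}$ from $\na P$ in the elliptic estimate requires $\alpha>\gamma-1$, and the convective/time-derivative terms $\br^{1-\alpha}$ require $\alpha>1$. My first attempt is to choose the weights so that every nonlinear term in (ii)–(iv) carries the factor $\br^{-\delta}$ with $\delta=\min\{\alpha-1,\,2\alpha-\gamma-1,\,\alpha-\gamma+1,\,\alpha-\beta\}>0$, where the last exponent reflects the coupling condition $\beta<\alpha$ from (iii). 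Taking $\Lambda_0$ large so that $C\Lambda_0^{-\delta}(1+K_0+M_0)^N\le\frac12$ then closes the bootstrap. The delicate point is the $L^q$ bound on $\na\rho$ without a time-integrable decay in $L^q$. I would handle it by estimating $\int_0^T\|\na u\|_{L^\infty}$ via $\|\na u\|_{L^2}^{\theta}\|\na^2u\|_{L^q}^{1-\theta}$ and using Hölder in time against the dissipation, which needs exponents that are $\br$-favorable.
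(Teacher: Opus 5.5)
\textbf{Gap 1: the estimates do not close uniformly in $T$.} Your architecture matches the paper's: local existence plus bootstrap, basic energy, testing with $u_t,w_t$ and its time-differentiated version, and Lamé-type $L^2/L^q$ elliptic estimates in which $\beta<\alpha$ makes the microrotation coupling perturbative. But step (iv) and the $L^\infty$ bound on $\rho$ fail globally. You bound $|\rho-\rho_0|$ by $\br\int_0^T\|\dg u\|_{L^\infty}\md t$, integrate $\br\|\na^2u\|_{L^p}$ in time, and propose to control $\int_0^T\|\na u\|_{L^\infty}\md t$ by Hölder against the dissipation. Every dissipative bound available here is only $L^2$ in time, for instance $\int_0^T\|\na u\|_{L^\infty}^2\md t\le C\br^{\gamma-\alpha}$ in (\ref{3.55}). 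Hölder against $\int_0^T\|\na u\|_{L^2}^2\md t$ with your interpolation would need $\|\na^2u\|_{L^q}\in L^{p}(0,\infty)$ with $p=2(1-\theta)/(2-\theta)<1$; otherwise you only get growth like $T^{1/2}$. Decay rates are not available under (\ref{1.6}), so there is no $L^1_t$ control and your constants depend on $T$.

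The paper never needs $L^1_t$. Writing $\rho\na\dg u=\rho(\na F_1+\na\mathcal P)/(2\mu_1+\lambda_1)$ as in (\ref{3.46}) yields the damping term $\br^{\gamma-\alpha}\|\na\rho\|_{L^q}^2$ in (\ref{3.47}). Young's inequality then gives
$\|\na u\|_{L^\infty}\|\na\rho\|_{L^q}^2\le\frac{1}{4}\br^{\gamma-\alpha}\|\na\rho\|_{L^q}^2+C\br^{\alpha-\gamma}\|\na u\|_{L^\infty}^2\|\na\rho\|_{L^q}^2$,
so the Gronwall factor $\exp(C\br^{\alpha-\gamma}\int_0^T\|\na u\|_{L^\infty}^2\md t)$ is $O(1)$, and every forcing term is likewise squared against the damping. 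The density bound is then obtained in (\ref{3.66}) by interpolation, not along trajectories:
$\|\rho-\br\|_{L^\infty}\le C\|\rho-\br\|_{L^2}^{2(q-3)/(5q-6)}\|\na\rho\|_{L^q}^{3q/(5q-6)}$.
Here $\|\rho-\br\|_{L^2}^2\le C\br^{3-\gamma}$ comes from the basic energy, so the bound is $o(\br)$ uniformly in time.

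\textbf{Gap 2: a $\br$-independent bound on $\na\rho$ cannot be propagated.} Your hypothesis $\|\na\rho\|_{L^2}+\|\na\rho\|_{L^q}\le 2M_0$ with $M_0$ independent of $\br$ fails in part of the admissible range. Already at the linear level, the viscous decay of $\dg u_0$ on the time scale $\br^{1-\alpha}$ transfers roughly $\br^{2-\alpha}$ times the $L^2$-norm of the gradient part of $u_0$ into $\|\na\rho\|_{L^2}$. This happens well before pressure relaxation acts on the scale $\br^{\alpha-\gamma}$. In the paper's damped estimate, the $\rho u_t$ term likewise contributes $\br^{2-\alpha-\gamma}\int_0^T\|\rho u_t\|_{L^q}^2\md t\lesssim\br^{3-\gamma}$. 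Hence for $1<\alpha<2$, $\gamma<3$ (allowed by (\ref{1.7})--(\ref{1.8})), $\|\na\rho\|$ genuinely grows with $\br$.

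This is why the paper bootstraps $\|\na\rho\|_{L^2}^2,\|\na\rho\|_{L^q}^2\le 2N\br^{b}$ with $b=\max\{3-\gamma,0\}$. Its index analysis in (\ref{3.20}), (\ref{3.23}) and (\ref{3.66}) only needs $b<2$. For example, the elliptic term $\br^{-1}\|\na\rho\|_{L^q}\|\na u\|$ stays small because $\br^{-1+b/2}\to0$. Your single exponent $\delta$ with $\br$-independent $M_0,K_0$ does not encode this growth. So $C\Lambda_0^{-\delta}(1+K_0+M_0)^N\le\frac{1}{2}$ is not the condition that actually has to be verified.
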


\begin{rem}
If $\bx=0$, the system (\ref{1.1}) will reduce to Navier-Stokes system with density-dependent viscosities. This result is consistent with the results obtained by \cite{HLZ} to the Navier-Stokes equations. To the best of our knowledge, this work presents the first global existence result for large strong solutions to the 3D micropolar equations without small initial data assumption, a requirement that has been essential in all existing results \cite{CXZ2015,SXZ2025,Z2023}.
\end{rem}

\begin{rem}
In addition to the same $\alpha,\,\beta,\,\gamma$ requirements of the Navier-Stokes equations in \cite{HLZ}, we also require $0\leq\beta<\alpha$ in (\ref{1.7}), which play a crucial role to the micropolar model, see details in  (\ref{3.17}), (\ref{3.18}), (\ref{3.31}), (\ref{3.57}) and (\ref{3.64}).
\end{rem}

\begin{rem}
Compared with the assumption $\alpha>1, 0\leq \beta \leq \frac{\alpha+1}{2}$ of inhomogeneous incompressible micropolar equations with density-dependent viscosities in \cite{LQ2025}, the assumption (\ref{1.7}) of Theorem \ref{th1} implies that $\beta$ has a wider range in our research work due to the fact $\frac{\alpha+1}{2}<\alpha$.
\end{rem}

\begin{rem}
The form of the micropolar equations with density-dependent viscosities differs fundamentally from that of the constant-viscosity model. The equation (4.16) of \cite{Eringen1966} implies that the momentum equation of micropolar system should have the term $\cl\cl u$, see detail of $(\ref{1.1})_2$. The micropolar equations of \cite{LQ2025} may be incorrect where they missed the rotation term.
\end{rem}

\begin{rem}\label{re5}
We can prove that Theorem \ref{th1} also holds for the 3D compressible system (\ref{1.1}) with $\lambda_1(\rho)$ and $\lambda_2(\rho)$ are nonnegative constants, which consistent with the research work of Navier-Stokes equations in \cite{ZZ2014}, where they only requiring initial shear viscosity to be large. And this result improves the study of \cite{HLZ} to some extent, for they requiring initial shear and bulk viscosities all to be large.
\end{rem}

\begin{rem}
As described in \cite{HLZ}, Xin and Zhu obtained the global regular solutions of 3D compressible Navier-Stokes equations if $\alpha>2\gamma-1$. In this paper, we assume that $\alpha>\max \left\{\frac{\gamma+1}{2},\,\gamma-1\right\}$. It is obvious that $2\gamma-1>\frac{\gamma+1}{2}$ and $2\gamma-1>\gamma-1$ due to $\gamma>1$. The problem of \cite{HLZ} allows vacuum. However, the present of vacuum will bring  essential difficulties to the proof of global existence of large solutions of 3D compressible micropolar system which will be addressed in our future work.
\end{rem}

We now provide our analysis and commentary on the key aspects of this paper. Rewriting $(\ref{1.1})_1$ and $(\ref{1.1})_2$ as the following:
\begin{align}
\mu_1\Delta u+(\mu_1+\lambda_1)\nabla\dg u=&\rho^{1-\alpha} \dot u-\mu_1\alpha\rho^{-1}\nabla\rho\otimes d(u)-\lambda_1\alpha\rho^{-1}\dg u\nabla\rho \nonumber\\
&+A\gamma\rho^{\gamma-\alpha-1}\nabla\rho-\rho^{-\alpha}\dg (2\bx\rho^\beta d(u))\nonumber\\
&+\rho^{-\alpha}\cl(\bx\rho^\beta\cl u)\nonumber\\
&-2\bx\beta\rho^{\beta-\alpha-1}\na\rho\times w-2\bx\rho^{\beta-\alpha}\cl w,\label{1.12}\\
\mu_2\Delta w+(\mu_2+\lambda_2)\nabla\dg w=&\rho^{1-\alpha} \dot w-\mu_2\alpha\rho^{-1}\nabla\rho\cdot\na w-(\mu_2+\lambda_2)\alpha\rho^{-1}\dg w\nabla\rho \nonumber\\
&-2\bx\rho^{\beta-\alpha}\cl u+4\bx\rho^{\beta-\alpha}w,\label{1.13}
\end{align}
where $$\dot u\triangleq u_t+u\cdot\na u,\quad \dot w\triangleq w_t+u\cdot\na w$$
denote the material derivatives. Therefore, if $\alpha>1,\,\alpha>\gamma-1,\,\beta<\alpha$ and $\rho\gg 1$, each term of the right-hand of (\ref{1.12}) and (\ref{1.13}) can be viewed as a small perturbation term, which is also the main observations of \cite{HLZ} and \cite{Yu2025}.

The rest of this paper is organized as follows. In section 2, we collect some elementary facts and inequality that will be needed in later analysis. In Section 3, we will devote to the $\rm a priori $ estimates that are needed to obtain the global existence of strong solutions. Then the main result, Theorem {\ref{th1} is proved in Section 4. The Remark \ref{re5} will be proved briefly in section 5.
\section{Preliminaries}
In this section, we recall some known facts and elementary inequalities that will used in the following proof. We begin with the local existence of strong solutions to the Cauchy problem (\ref{1.1})--(\ref{1.5}), the proof can be similarly performed as those in \cite{ZZ2014}, we omit the detail here.
\begin{lem}\label{lem2.1}
{\rm (Local existence)}
Assume that $(\rho_0, u_0, w_0)$ satisfies (\ref{1.6}), then there exist a small time $T_0 >0$ and a unique strong solution $(\rho, u, w)$ to the problem (\ref{1.1})--(\ref{1.5}) in $\R^3 \times (0, T_0]$ satisfying \be\left\{
\begin{aligned}
&\rho\in C([0,T_0];D^{1,2}\cap D^{1,q}),\quad \rho_t\in C([0,T_0];L^q),\\
&\na u\in C([0,T_0];H^1)\cap L^2([0,T_0];W^{1,q}),\\
&\sqrt{\rho}u_t\in L^\infty([0,T_0];L^2),\quad u_t\in L^2([0,T_0];H^1),\\
& w\in C([0,T_0];H^2)\cap L^2([0,T_0];W^{2,q}),\\
&\sqrt{\rho}w_t\in L^\infty([0,T_0];L^2),\quad w_t\in L^2([0,T_0];H^1).\label{2.1}
\end{aligned}\right.
\ee
\end{lem}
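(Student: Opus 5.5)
The plan is the standard linearization and contraction-mapping scheme used for compressible Navier--Stokes with density bounded away from zero (as in \cite{ZZ2014}). Since $\frac34\br\le\rho_0\le\frac54\br$, there is no vacuum. For a short time we therefore expect $\frac23\br\le\rho\le\frac43\br$, so every viscosity coefficient $\mu_1\rho^\alpha$, $\lambda_1\rho^\alpha$, $\bx\rho^\beta$, $\mu_2\rho^\alpha$, $\lambda_2\rho^\alpha$ is uniformly positive and smooth in $\rho$. The momentum and micro-rotation equations then form a uniformly parabolic (Lam\'e-type) system for $(u,w)$ with variable coefficients. The one non-obvious point is the structure of the $u$-operator:
$$\dg(2(\mu_1+\bx)\rho^\alpha\ldots)\qquad\text{together with}\qquad -\cl(\bx\rho^\beta\cl u).$$
Using $-\cl\cl u=\Delta u-\na\dg u$, its principal symbol is $(\mu_1\rho^\alpha+\bx\rho^\beta)|\xi|^2 I+(\mu_1\rho^\alpha+\lambda_1\rho^\alpha+\bx\rho^\beta)\xi\otimes\xi$ up to the splitting. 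By (\ref{1.3}) this symbol is strongly elliptic, so the $L^2$ and $L^q$ elliptic estimates apply.

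\textbf{Step 1: linearized problem.} Fix $(v,z)$ in the solution class (\ref{2.1}) on $[0,T]$ with $v(0)=u_0$, $z(0)=w_0$. First solve the transport equation $\rho_t+\dg(\rho v)=0$ by characteristics. This gives $\rho\in C([0,T];D^{1,2}\cap D^{1,q})$ and the bound $\|\na\rho(t)\|_{L^2\cap L^q}\le C\exp(C\int_0^t\|\na v\|_{W^{1,q}})$. We also get $\rho$ within $[\frac23\br,\frac43\br]$ for $T$ small, since $\int_0^T\|\dg v\|_{L^\infty}\,dt\le CT^{1/2}\|\na v\|_{L^2W^{1,q}}$.

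Next solve the linear parabolic system for $(u,w)$. Its coefficients are frozen at this $\rho$, the convection uses $v$, and the coupling terms $2\cl(\xi(\rho)z)$, $2\xi(\rho)\cl v$ and $4\xi(\rho)z$ act as sources. Existence for this system comes from Galerkin or standard linear parabolic theory.

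Then derive the a priori bounds:
\begin{itemize}
\item the energy estimate;
\item the $\|\na u\|_{L^2}$ estimate, by testing with $u_t$;
\item the $\|\sqrt\rho u_t\|_{L^2}$ estimate, by differentiating in $t$ and testing with $u_t$;
\item $H^2$ and $W^{2,q}$ bounds, via the elliptic estimate for the Lam\'e-type operator. The lower-order terms containing $\na\rho$ are treated as in (\ref{1.12})--(\ref{1.13}), using $\na\rho\in L^q$ with $q>3$.
\end{itemize}
The compatibility needed for $\sqrt\rho u_t(0)\in L^2$ follows from $u_0,w_0\in H^2$ and $\na\rho_0\in L^2\cap L^q$, because $\rho_0$ is bounded below.

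\textbf{Step 2: invariance and contraction.} Choosing $T$ small, the map $(v,z)\mapsto(u,w)$ sends a ball of the space (\ref{2.1}) into itself. Time-integrals such as $\int_0^T\|\na v\|_{W^{1,q}}$ carry powers $T^{1/2}$, which absorb the constants. I then show the map contracts in the weaker norm $L^\infty(0,T;L^2)\cap L^2(0,T;H^1)$ for $(u,w)$, together with $L^\infty(0,T;L^2)$ for the density difference. This is done by subtracting two iterates and running energy estimates. The differences of viscosity coefficients, for instance $(\rho_1^\alpha-\rho_2^\alpha)\na u_2$, are controlled by $\|\rho_1-\rho_2\|_{L^2}\|\na u_2\|_{L^\infty}$ with $\na u_2\in L^2W^{1,q}\subset L^2L^\infty$. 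A fixed point then follows, with the higher regularity recovered by weak lower semicontinuity. Uniqueness comes from the same difference estimate combined with Gronwall. Time continuity in (\ref{2.1}) follows from the usual argument that $f\in L^2H^{2}$ and $f_t\in L^2L^2$ imply $f\in C(H^1)$, applied to $\na u$ and $w$, along with the transport structure for $\rho$.

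\textbf{Main obstacle.} I expect the hardest step to be the $W^{2,q}$ elliptic regularity in the presence of the variable coefficient $\bx\rho^\beta$ inside $\cl(\bx\rho^\beta\cl u)$ and inside $\dg(\rho^\alpha d(u))$. The coefficients are only in $W^{1,q}$, not smooth. My plan is to expand as in (\ref{1.12}): divide by $\rho^\alpha$, or by the leading coefficient, to get a constant-coefficient principal part plus commutator terms of the form $\rho^{-1}\na\rho\,\na u$. These commutators are estimated in $L^q$ by $\|\na\rho\|_{L^q}\|\na u\|_{L^\infty}$, and then $\|\na u\|_{L^\infty}\le \epsilon\|\na^2u\|_{L^q}+C_\epsilon\|\na u\|_{L^2}$ closes the estimate. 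The catch is that the term with $\rho^{\beta-\alpha}$ multiplies $\Delta u$ as well, so the frozen principal part must use the full combined coefficient $\mu_1\rho^\alpha+\bx\rho^\beta$ rather than $\mu_1\rho^\alpha$ alone. Since the density is locally close to constant, a freezing and partition-of-unity argument, or simply dividing by $\mu_1\rho^\alpha+\bx\rho^\beta$, handles this.
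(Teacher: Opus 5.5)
Your scheme matches the paper's route. The paper gives no proof and simply refers to \cite{ZZ2014}; your scheme is that standard local theory: characteristics for $\rho$, a linear Lam\'e-type parabolic system for $(u,w)$ with coefficients evaluated at that $\rho$, uniform bounds on a short interval, and contraction in $L^\infty L^2\cap L^2H^1$ with density differences in $L^\infty L^2$. Your compatibility, invariance and contraction steps are sound.

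\emph{Time continuity fails as written.} The lemma ``$f\in L^2H^2$, $f_t\in L^2L^2\Rightarrow f\in C(H^1)$'' applied to $f=\na u$ needs $\na u\in L^2(0,T;H^2)$, i.e.\ $\na^3u\in L^2(0,T;L^2)$. That is not in the class (\ref{2.1}), and it is not available when $\rho$ has only one derivative. Applied to $f=w$, the lemma yields only $w\in C([0,T];H^1)$. So you obtain neither $\na u\in C([0,T_0];H^1)$ nor $w\in C([0,T_0];H^2)$. The usual repair goes through the equations:
\begin{itemize}
\item The time-differentiated system gives $u_{tt},w_{tt}\in L^2(0,T;H^{-1})$. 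To see this, divide by $\rho\ge\frac23\br$; multiplication by $\rho^{-1}$ is bounded on $H^{\pm1}$ because $\na\rho\in L^3$. Hence $u_t,w_t\in C([0,T];L^2)$.
\item The elliptic right-hand sides, $\rho u_t+\rho u\cdot\na u+\na P-2\cl(\bx\rho^\beta w)$ and its analogue for $w$, then lie in $C([0,T];L^2)$.
\item The coefficients are continuous in time, since $\rho\in C([0,T];L^\infty)$ and $\na\rho\in C([0,T];L^q)$.
\item The $L^2$ elliptic estimate applied to $u(t)-u(s)$ and $w(t)-w(s)$ then gives $\na^2u,\na^2w\in C([0,T];L^2)$.
\end{itemize}

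\emph{The elliptic step has a symbol error and one unusable option.} Your symbol omits the contribution of $-\cl(\bx\rho^\beta\cl u)=\bx\rho^\beta(\Delta u-\na\dg u)+\dots$. The correct principal part is $(\mu_1\rho^\alpha+2\bx\rho^\beta)\Delta u+(\mu_1+\lambda_1)\rho^\alpha\na\dg u$. Its transverse and longitudinal coefficients are $\mu_1\rho^\alpha+2\bx\rho^\beta$ and $(2\mu_1+\lambda_1)\rho^\alpha+2\bx\rho^\beta$. It is still strongly elliptic, because $2\mu_1+\lambda_1\ge\frac43\mu_1>0$.

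When $\beta\ne\alpha$ these two coefficients carry different powers of $\rho$. Dividing by a single scalar therefore never produces a constant-coefficient principal part. Since Lemma~\ref{lem2.1} is claimed for every $\br$, the leftover $x$-dependence is of order one. The paper can absorb $C_1\br^{\beta-\alpha}\|\na^2u\|$ in (\ref{3.17})--(\ref{3.18}) only because $\br$ is large there. So drop the ``simply divide'' option. Freezing does work, since $\na\rho\in L^q$ with $q>3$ makes $\rho$ uniformly H\"older continuous on $\R^3$.

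A cleaner route, closer to the paper, avoids freezing entirely:
\begin{itemize}
\item Write the operator as $\na\tilde F-\cl\tilde V$ plus terms bounded by $C|\na\rho||\na u|$. Here $\tilde F=((2\mu_1+\lambda_1)\rho^\alpha+2\bx\rho^\beta)\dg u$ and $\tilde V=(\mu_1\rho^\alpha+2\bx\rho^\beta)\cl u$.
\item Taking $\dg$ removes every curl term, so $\na\tilde F$ comes from a Laplace equation.
\item Then $\na\tilde V$ follows from $\|\na\tilde V\|_{L^p}\le C(\|\dg\tilde V\|_{L^p}+\|\cl\tilde V\|_{L^p})$, where $\dg\tilde V=\na(\mu_1\rho^\alpha+2\bx\rho^\beta)\cdot\cl u$ is of first order.
\item Finally, $\|\na^2u\|_{L^p}\le C(\|\na\dg u\|_{L^p}+\|\na\cl u\|_{L^p})$ for $p\in\{2,q\}$ closes the $H^2$ and $W^{2,q}$ bounds, with no largeness assumption on $\br$.
\end{itemize}
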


Next, the following well-known Gagliardo-Nirenberg inequality will be used frequently later (see \cite{LSU1968}, Theorem 2.2).
\begin{lem}\label{lem2.2}
{\rm (Gagliardo-Nirenberg inequality)}
For $r\in(6/5,3)$ and
\be\label{2.2}
p \in \left\{\begin{matrix}\begin{aligned}
 &[2,\frac{3r}{3-r}],\\
& \left[2,\infty\right),\\
& \left[2,\infty\right],
\end{aligned}
\begin{aligned}
&\quad{ if}\,\,r\in(\frac{6}{5},3),\\
&\quad{ if}\,\,r=3,\\
&\quad{ if}\,\,r\in(3,\infty],
\end{aligned}\end{matrix}\right.\ee
 then there exists some generic positive constant $C$ that may depend on $p$ and $r$ such that for any  $f\in \left\{f|f\in L^2,\,\nabla f\in L^r\right\}$,
\be\|f\|_{L^p} \leq C(p,r) \|f\|_{L^2}^{\alpha} \|\nabla f\|_{L^r}^{1-\alpha},\quad\frac{\alpha}{2}+\left(\frac{1}{r}-\frac{1}{3}\right)(1-\alpha)=\frac{1}{p}.\label{2.2}
\ee
\end{lem}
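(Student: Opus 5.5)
The plan is to reduce everything to two standard ingredients: a homogeneous Sobolev (or Morrey) bound for $f$ in terms of $\|\nabla f\|_{L^r}$, followed by Hölder interpolation against $\|f\|_{L^2}$. The exponent relation in (\ref{2.2}) is exactly the one forced by the scaling $f(x)\mapsto f(\lambda x)$. So once some bound with the correct homogeneity is proved, the exponents take care of themselves. I would first prove the inequality for $f\in C_c^\infty(\R^3)$ and then pass to general $f$ with $f\in L^2$, $\nabla f\in L^r$ by mollification and truncation. The hypothesis $f\in L^2$ is what rules out adding a nonzero constant at infinity, so the homogeneous estimates apply to $f$ itself.

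\emph{Case $r\in(6/5,3)$.} The Sobolev inequality gives $\|f\|_{L^{r^*}}\le C\|\nabla f\|_{L^r}$ with $r^*=\frac{3r}{3-r}$, and $r>6/5$ guarantees $r^*>2$. For $p\in[2,r^*]$, Hölder interpolation gives
$$\|f\|_{L^p}\le \|f\|_{L^2}^{\alpha}\|f\|_{L^{r^*}}^{1-\alpha},\qquad \frac1p=\frac{\alpha}{2}+\frac{1-\alpha}{r^*},$$
and $\frac1{r^*}=\frac1r-\frac13$ is precisely the stated relation.

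\emph{Case $r\in(3,\infty]$.} I would use the Morrey-type local estimate, valid for every ball $B_R(x)$:
$$|f(x)|\le C\Big(R^{-3/2}\|f\|_{L^2(B_R(x))}+R^{1-3/r}\|\nabla f\|_{L^r}\Big).$$
Choosing $R$ to balance the two terms, i.e. $R^{5/2-3/r}=\|f\|_{L^2}/\|\nabla f\|_{L^r}$, yields the $L^\infty$ bound with the exponent given by (\ref{2.2}) at $p=\infty$. Interpolating between $L^2$ and $L^\infty$ then covers $p\in[2,\infty]$.

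\emph{Case $r=3$, $p<\infty$.} This is the step I expect to be the main obstacle, since $W^{1,3}$ does not embed into $L^\infty$. I would apply the $W^{1,1}\hookrightarrow L^{3/2}$ Sobolev inequality to $|f|^s$ and then use Hölder:
$$\|f\|_{L^{3s/2}}^{s}=\||f|^s\|_{L^{3/2}}\le Cs\,\||f|^{s-1}|\nabla f|\|_{L^1}\le Cs\,\|f\|_{L^{3(s-1)/2}}^{s-1}\|\nabla f\|_{L^3}.$$
Starting from $3(s-1)/2=2$, i.e. $s=7/3$, this gives $p=7/2$. Iterating along $s_{k+1}$ with $3(s_{k+1}-1)/2=3s_k/2$ produces a sequence of exponents tending to infinity, and at each stage the bound has the form $\|f\|_{L^2}^{\alpha}\|\nabla f\|_{L^3}^{1-\alpha}$, with $\alpha$ matching (\ref{2.2}) by scaling. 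Interpolating between consecutive exponents then covers every $p\in[2,\infty)$. The constants grow with $s$, which is why $p=\infty$ must be excluded here, consistent with (\ref{2.2}).
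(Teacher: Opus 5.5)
Your proof is correct, and it necessarily takes a different route from the paper: the paper gives no argument of its own and simply quotes the multiplicative inequality from Ladyzhenskaya--Solonnikov--Ural'tseva (Theorem~2.2 there). The citation buys brevity and a statement valid in every dimension and for general base norms $L^s$. Your case split instead gives a self-contained proof in which each restriction is visible. In the Sobolev-plus-H\"older case, $r>6/5$ is exactly what makes $3r/(3-r)>2$. For $r>3$, Morrey's local estimate with the optimized radius $R^{5/2-3/r}=\|f\|_{L^2}/\|\nabla f\|_{L^r}$ gives the $L^\infty$ endpoint, and interpolation does the rest. At $r=3$, the growth of the constants $Cs$ in the $|f|^s$ iteration (the classical device behind such multiplicative inequalities) is why $p=\infty$ is lost. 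Your approximation step also does real work that the bare citation glosses over. Results of this type are usually formulated for limits of $C_c^\infty$ functions, whereas the lemma is asserted for every $f\in L^2$ with $\nabla f\in L^r$. It is your mollification and cut-off argument that transfers the estimate to this larger class: the cut-off error $\|f_\varepsilon\nabla\chi_R\|_{L^r}$ is bounded by $CR^{3/r-5/2}\|f_\varepsilon\|_{L^2(R<|x|<2R)}$ when $r\le 2$ (harmless since $r>6/5$), and by $CR^{-1}\|f_\varepsilon\|_{L^r}$ when $r>2$. Two minor points to tidy up. First, when choosing $R$ in the Morrey case, treat $\nabla f\equiv 0$ separately: then $f$ is a constant in $L^2$, hence zero. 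Second, for $r=\infty$ smooth compactly supported functions are not dense in norm, but you do not need that. Morrey's estimate applies directly to the Lipschitz function $f$; alternatively, use $\|\nabla f_\varepsilon\|_{L^\infty}\le\|\nabla f\|_{L^\infty}$ and pass to the limit pointwise.
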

\section{A priori estimates}
For any fixed time $T>0$, let $(\rho, u, w)$ be a strong solution to $(\ref{1.1})-(\ref{1.5})$ on $\mathbb{R}^3\times (0,T]$ with the initial data $(\rho_0, u_0, w_0)$ satisfying (\ref{1.6}). Define

\begin{align}
&\varepsilon_{\rho,1}(T)\triangleq \sup_{t\in[0,T]}\|\rho-\bar{\rho}\|_{L^\infty},\nonumber\\
&\varepsilon_{\rho,2}(T)\triangleq \sup_{t\in[0,T]}\|\nabla\rho\|_{L^q}^2+{\bar{\rho}}^{\gamma-\alpha}\int_0^T\|\nabla\rho\|_{L^q}^2\md t,\nonumber\\
&\varepsilon_{\rho,3}(T)\triangleq \sup_{t\in[0,T]}\|\nabla\rho\|_{L^2}^2+{\bar{\rho}}^{\gamma-\alpha}\int_0^T\|\nabla\rho\|_{L^2}^2\md t,\nonumber\\
&\varepsilon_{u,w,1}(T)\triangleq \sup_{t\in[0,T]}\frac{{\bar{\rho}}^\alpha}{2^{\alpha+2}}\left(\mu_1\|\nabla u\|_{L^2}^2+\mu_2\|\nabla w\|_{L^2}^2\right)\label{3.1}\\
&\qquad\qquad\,\,+\frac{1}{2}\int_0^T\|\sqrt{\rho}u_t\|_{L^2}^2\md t+\frac{1}{2}\int_0^T\|\sqrt{\rho}w_t\|_{L^2}^2\md t,\nonumber\\
&\varepsilon_{u,w,2}(T)\triangleq \sup_{t\in[0,T]}\left(\|\sqrt{\rho} u_t\|_{L^2}^2+\|\sqrt{\rho} w_t\|_{L^2}^2\right)\nonumber\\
&\qquad\qquad\,\,+\frac{{\bar{\rho}}^\alpha}{2^{\alpha+2}}\int_0^T\left(\mu_1\|\nabla u_t\|_{L^2}^2+\mu_2\|\nabla w_t\|_{L^2}^2\right)\md t.\nonumber
\end{align}

We have the following key proposition.
\begin{prop}\label{p}
Under the conditions of Theorem \ref{th1}, for
\begin{equation}\label{3.2}
b=\max\{3-\gamma,\,0\},
\end{equation}
there exist positive constants
$$N_3=N_3(\mu_1,\,\lambda_1,\,\mu_2,\,\lambda_2,\,\bx,\,\rho_0,\,\|\na u_0\|_{L^2},\,\|w_0\|_{H^1}),$$
$$N_i=N_i(A,\,\mu_1,\,\lambda_1,\,\mu_2,\,\lambda_2,\,\bx,\,\alpha,\,\beta,\,\gamma,\,\|G(\rho_0)\|_{L^1},\,\|\na \rho_0\|_{L^2},\,\|\na \rho_0\|_{L^q},\,\|\na u_0\|_{H^1},\,\|w_0\|_{H^2})$$
for $i=1,2,4$ and
$$\Lambda_0=\Lambda_0(A,\,\mu_1,\,\lambda_1,\,\mu_2,\,\lambda_2,\,\bx,\,\alpha,\,\beta,\,\gamma,\,\|G(\rho_0)\|_{L^1},\,\|\na \rho_0\|_{L^2},\,\|\na \rho_0\|_{L^q},\,\|u_0\|_{H^2},\,\|w_0\|_{H^2})$$
such that if $(\rho,\,u,\,w)$ is a smooth solution to the problem (\ref{1.1})-(\ref{1.5}) on $\mathbb{R}^3\times(0,T]$ satisfying
\be\label{3.3}
\begin{aligned}
&\varepsilon_{\rho,1}(T)\leq \frac{\bar{\rho}}{2},\quad
\varepsilon_{\rho,2}(T)\leq 2N_1{\bar{\rho}}^b,\quad
\varepsilon_{\rho,3}(T)\leq 2N_2{\bar{\rho}}^b,\\
&\varepsilon_{u,w,1}(T)\leq 2^{\alpha+2}N_3{\bar{\rho}}^\alpha,\quad
\varepsilon_{u,w,2}(T)\leq 3N_4{\bar{\rho}}^{2\alpha-1},
\end{aligned}
\ee
the following estimates hold:
\be\label{3.4}
\begin{aligned}
&\varepsilon_{\rho,1}(T)\leq \frac{\bar{\rho}}{4},\quad
\varepsilon_{\rho,2}(T)\leq N_1{\bar{\rho}}^b,\quad
\varepsilon_{\rho,3}(T)\leq N_2{\bar{\rho}}^b,\\
&\varepsilon_{u,w,1}(T)\leq 2^{\alpha+1}N_3{\bar{\rho}}^\alpha,\quad
\varepsilon_{u,w,2}(T)\leq 2N_4{\bar{\rho}}^{2\alpha-1},
\end{aligned}
\ee
provided that
$$\bar{\rho}\geq \Lambda_0,$$
with $\Lambda_0$ defined in (\ref{3.67}).
\end{prop}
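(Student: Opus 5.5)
Under the bootstrap hypothesis (\ref{3.3}), the density stays in $[\bar\rho/2,3\bar\rho/2]$, so every factor $\rho^s$ is comparable to $\bar\rho^s$. The plan is to close five estimates, proceeding from low to high order. Each improvement from (\ref{3.3}) to (\ref{3.4}) will come from a negative power of $\bar\rho$ (produced by $\alpha>1$, $\alpha>\gamma-1$, $\alpha>(\gamma+1)/2$, $\beta<\alpha$) multiplying the a priori quantities, so the error terms are absorbed once $\bar\rho\ge\Lambda_0$.

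\textbf{Step 1 (basic energy).} Multiply $(\ref{1.1})_2$ by $u$ and $(\ref{1.1})_3$ by $w$, and use the potential energy $G(\rho)$. The microrotation coupling $-\cl(\xi\cl u)+2\cl(\xi w)$ against $u$, together with $4\xi w-2\xi\cl u$ against $w$, combines into the nonnegative dissipation $\int\xi|\cl u-2w|^2$. This gives a uniform bound on $\|\sqrt\rho u\|_{L^2}^2+\|\sqrt\rho w\|_{L^2}^2+\|G(\rho)\|_{L^1}$ and on $\bar\rho^\alpha\int_0^T(\|\nabla u\|^2_{L^2}+\|\nabla w\|^2_{L^2})$.

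\textbf{Step 2 ($\varepsilon_{u,w,1}$ and $\varepsilon_{u,w,2}$).} Multiply by $u_t,w_t$ to get $\frac{d}{dt}\bar\rho^\alpha(\mu_1\|\nabla u\|^2+\mu_2\|\nabla w\|^2)+\|\sqrt\rho u_t\|^2+\|\sqrt\rho w_t\|^2\le$ error terms. The errors involve $\rho_t\rho^{\alpha-1}|\nabla u|^2$, the $\bar\xi\rho^\beta$ terms, the pressure, and the convection. Each is controlled through the elliptic reformulation (\ref{1.12})--(\ref{1.13}): $L^2$ and $L^q$ elliptic estimates give $\|\nabla^2u\|_{L^2}\lesssim\bar\rho^{1-\alpha}\|\dot u\|_{L^2}+\bar\rho^{-1}\|\nabla\rho\|_{L^3}\|\nabla u\|_{L^6}+\bar\rho^{\gamma-\alpha}\|\nabla\rho\|_{L^2}+\bar\rho^{\beta-\alpha}(\ldots)$. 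Under (\ref{3.3}), the gradient and $\beta$ terms are absorbed on the left, since $\beta<\alpha$ and $\bar\rho^{-1}\|\nabla\rho\|_{L^q}\ll1$. Next, differentiate the equations in $t$ and test with $u_t,w_t$. This yields $\varepsilon_{u,w,2}$, with the initial value $\|\sqrt\rho u_t(0)\|^2\lesssim\bar\rho^{2\alpha-1}$ computed from the equation. The commutator terms $\rho_t\rho^{\alpha-1}\nabla u\nabla u_t$ are bounded using $\|\rho_t\|_{L^q}\le\bar\rho\|\nabla u\|_{L^q}+\|u\|_{L^\infty}\|\nabla\rho\|_{L^q}$ and Lemma \ref{lem2.2}. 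In each term, the bootstrap sizes $N_3\bar\rho^\alpha$ and $3N_4\bar\rho^{2\alpha-1}$ appear multiplied by an extra $\bar\rho^{-\delta}$, $\delta>0$, which gives the factor $2$ instead of $3$.

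\textbf{Step 3 (density).} Apply $\nabla$ to the mass equation and test with $|\nabla\rho|^{p-2}\nabla\rho$ for $p=2,q$. The dangerous term $\rho\nabla\dg u$ is handled by the effective viscous flux: from (\ref{1.12}), $(2\mu_1+\lambda_1)\rho^\alpha\dg u-P$ is controlled via $\rho\dot u$ and the $\xi$ terms. This gives the damping $\bar\rho^{\gamma-\alpha}\|\nabla\rho\|_{L^p}^p$ (the source of the time integrals in $\varepsilon_{\rho,2},\varepsilon_{\rho,3}$), with forcing $\bar\rho^{1-\alpha}\|\nabla\dot u\|$-type terms. Integrating in time and inserting Step 2 gives bounds $N\bar\rho^{b}$. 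The exponent $b=\max\{3-\gamma,0\}$ arises in balancing $\bar\rho^{2(1-\alpha)}\cdot\bar\rho^{2\alpha-1}$ against the damping rate $\bar\rho^{\gamma-\alpha}$, and $\alpha>(\gamma+1)/2$ is needed here. Finally, $\varepsilon_{\rho,1}\le\bar\rho/4$ follows from Lemma \ref{lem2.2}: $\|\rho-\bar\rho\|_{L^\infty}\lesssim\|\rho-\bar\rho\|_{L^6}^{\theta}\|\nabla\rho\|_{L^q}^{1-\theta}$. Here $\|\rho-\bar\rho\|_{L^2}$ comes from $G$ (with $G\sim\bar\rho^{\gamma-2}|\rho-\bar\rho|^2$), so this bound is $\lesssim\bar\rho^{\text{power}<1}$.

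\textbf{Main obstacle.} The main difficulty is the exponent bookkeeping in Step 3 combined with the $u_t$-estimate of Step 2, namely ensuring that each cross term carries a strictly negative net power of $\bar\rho$ under exactly (\ref{1.7})--(\ref{1.8}). The terms most likely to be delicate are the new micropolar couplings $\bar\xi\beta\rho^{\beta-\alpha-1}\nabla\rho\times w$ and $\rho^{-\alpha}\cl(\rho^\beta\cl u)$, which contain $\nabla\rho$ and must be bounded in $L^q$ by $\bar\rho^{\beta-\alpha}$ times bootstrap quantities. I would handle these first, defining $\Lambda_0$ as the maximum of the finitely many thresholds produced.
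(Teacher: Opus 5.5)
Your outline is correct and follows the paper's proof essentially step for step: the energy identity with the $\bar\xi\rho^\beta|\cl u-2w|^2$ dissipation (Lemma \ref{L1}), the $L^2$/$L^q$ elliptic bounds (\ref{3.17})--(\ref{3.18}) with the $\br^{\beta-\alpha}$ terms absorbed for large $\br$ (Lemma \ref{L2}), the $u_t,w_t$-multiplier and time-differentiated estimates with initial size $\br^{2\alpha-1}$ (Lemmas \ref{L3}--\ref{L4}), the damped $\nabla\rho$ estimates via the effective viscous flux (Lemma \ref{L5}), and Gagliardo--Nirenberg for $\varepsilon_{\rho,1}$ in (\ref{3.66}). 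One correction to your heuristic for $b$: the balance you write does not give $3-\gamma$; that power comes from the time-integrated bounds $\int_0^T\|\sqrt\rho u_t\|_{L^2}^2\md t\lesssim\br^{\alpha}$ (from $\varepsilon_{u,w,1}$) and $\int_0^T\|\rho u_t\|_{L^q}^2\md t\lesssim\br^{\alpha+1}$ (interpolating with $\varepsilon_{u,w,2}$), which after Young's inequality against the damping $\br^{\gamma-\alpha}\|\nabla\rho\|^2$ are multiplied by $\br^{3-\gamma-\alpha}$ and $\br^{2-\gamma-\alpha}$ respectively, and it is this time integrability, not the pointwise size of $\|\sqrt\rho u_t\|_{L^2}^2$, that makes the $\nabla\rho$ bounds uniform in $T$.
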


First, we have the following basic energy estimate.
\begin{lem}\label{L1}
Under the assumption that
\begin{equation}\label{3.5}\varepsilon_{\rho,1}(T)\leq \frac{\bar{\rho}}{2},\end{equation}
then it holds that
\begin{align}
&\sup_{t\in[0,T]}\left(\bar{\rho}\|u\|_{L^2}^2+\bar{\rho}\|w\|_{L^2}^2+{\bar{\rho}}^{\gamma-2}\|\rho-\bar{\rho}\|_{L^2}^2\right)\nonumber\\
&\qquad+{\bar{\rho}}^{\alpha}\int_0^T\left(\|\nabla u\|_{L^2}^2+\|\nabla w\|_{L^2}^2\right)\md t+\bx{\bar{\rho}}^{\beta}\int_0^T\|w\|_{L^2}^2\md t\leq C\bar{\rho},\label{3.6}
\end{align}
with $$C=C(A,\,\gamma,\,\mu_1,\,\mu_2,\,\lambda_1,\,\lambda_2,\,\|u_0\|_{L^2},\,\|w_0\|_{L^2},\,\|G(\rho_0)\|_{L^1}).$$
\end{lem}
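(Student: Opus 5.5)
The plan is the standard energy identity. Multiply $(\ref{1.1})_2$ by $u$ and $(\ref{1.1})_3$ by $w$, integrate over $\R^3$, and add. Use the continuity equation $(\ref{1.1})_1$ to handle the convective terms. For the pressure, introduce the potential energy
$$G(\rho)=\rho\int_{\bar\rho}^{\rho}\frac{P(s)-P(\bar\rho)}{s^2}\,\md s ,$$
which satisfies $G(\rho)_t+\dg(G(\rho)u)+(P-P(\bar\rho))\dg u=0$, so $\int\na P\cdot u\,\md x=\frac{d}{dt}\int G(\rho)\,\md x$.

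The viscous terms give the dissipation
$$\int\Big(2\mu_1\rho^\alpha|d(u)|^2+\lambda_1\rho^\alpha(\dg u)^2+\mu_2\rho^\alpha|\na w|^2+(\mu_2+\lambda_2)\rho^\alpha(\dg w)^2\Big)\md x .$$
The micropolar terms combine after integration by parts:
$$2\int\bx\rho^\beta|d(u)|^2+\int\bx\rho^\beta|\cl u|^2-4\int\bx\rho^\beta w\cdot\cl u+4\int\bx\rho^\beta|w|^2-\text{(already counted)}.$$
Here $\int\cl(\xi w)\cdot u=\int\xi w\cdot\cl u$ and $\int\xi\cl u\cdot w$ each contribute $2\int\xi w\cdot\cl u$. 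So the coupling reduces to $\int\bx\rho^\beta|\cl u-2w|^2\ge0$, together with $2\bx\int\rho^\beta|d(u)|^2\ge0$.

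The main delicacy is the coercivity of the viscous part under the sign conditions (\ref{1.3}), (\ref{1.4}). Pointwise, $2\mu_1|d(u)|^2+\lambda_1(\dg u)^2\ge 2\mu_1|d(u)-\frac13\dg u\,I|^2+\frac{2\mu_1+3\lambda_1}{3}(\dg u)^2\ge0$, which only controls $d(u)$, not $\na u$. Since the weight $\rho^\alpha$ is nonconstant, Korn's identity cannot be applied directly. To get $\na u$, I will use part of the $\bx\rho^\beta|d(u)|^2$ term only if needed. More simply, the hypothesis $\rho\in[\bar\rho/2,3\bar\rho/2]$ from (\ref{3.5}) lets me write $2\mu_1\rho^\alpha|d(u)|^2\ge c\mu_1\bar\rho^\alpha|d(u)|^2$ when $\lambda_1\ge0$. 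When $\lambda_1<0$, I use the trace-free decomposition above with weight comparability.

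I then bound below by $c\bar\rho^\alpha\int|d(u)|^2$ and absorb the $(\dg u)^2$ contribution. I will rely on $2\int|d(u)|^2=\int|\na u|^2+\int(\dg u)^2$ for the constant-weight version. If strict $2\mu_1+3\lambda_1>0$ is not available, the $\mu_1$ part alone still gives $c\bar\rho^\alpha\|\na u\|_{L^2}^2$ via the trace-free Korn inequality on $\R^3$. I expect this to be the only nontrivial step. The angular part is immediate, since $\mu_2|\na w|^2$ is already present and the $\dg w$ term is handled the same way. For the damping, I split $4\bx\rho^\beta|w|^2$ against the $|\cl u-2w|^2$ form. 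Alternatively I keep $\int\bx\rho^\beta|\cl u-2w|^2$ and extract $\bx\rho^\beta|w|^2$ by using $|\cl u|^2\le 2|\na u|^2$ and a small multiple of the $\na u$ dissipation. This uses the fact that $\beta<\alpha$ and $\bar\rho$ is large, so $\bx\bar\rho^\beta\ll\bar\rho^\alpha$. This yields the term $\bx\bar\rho^\beta\int_0^T\|w\|_{L^2}^2$.

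Integrating in time gives $\frac12\|\sqrt\rho u\|^2+\frac12\|\sqrt\rho w\|^2+\|G(\rho)\|_{L^1}$ plus the dissipation, bounded by initial data of size $\lesssim\bar\rho(\|u_0\|^2+\|w_0\|^2)+\|G(\rho_0)\|_{L^1}$. Finally, $\rho\ge\bar\rho/2$ gives the $\bar\rho\|u\|^2$ terms. The bound $G(\rho)\ge c A\gamma\bar\rho^{\gamma-2}(\rho-\bar\rho)^2$ for $\rho\in[\bar\rho/2,3\bar\rho/2]$ follows from a Taylor expansion, since $G''(\rho)=P'(\rho)/\rho=A\gamma\rho^{\gamma-2}$. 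Since $\bar\rho\ge1$, $\|G(\rho_0)\|_{L^1}\le C\bar\rho$, which yields (\ref{3.6}).
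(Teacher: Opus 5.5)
Your proposal is correct and follows the paper's route. It uses the energy identity with the potential $G(\rho)$, recombines the micropolar coupling into $2\bx\rho^\beta|d(u)|^2+\bx\rho^\beta|\cl u-2w|^2$, uses the comparability $\bar{\rho}/2\le\rho\le 3\bar{\rho}/2$ together with the $\R^3$ identity $2\|d(u)\|_{L^2}^2=\|\na u\|_{L^2}^2+\|\dg u\|_{L^2}^2$, and finishes with $G(\rho)\sim\bar{\rho}^{\gamma-2}(\rho-\bar{\rho})^2$. You are also more explicit than the paper on two steps it leaves implicit. First, you lower the weight $\rho^\alpha$ only on the pointwise nonnegative combination $2\mu_1|d(u)|^2+\lambda_1(\dg u)^2$, which is needed when $\lambda_1<0$. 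Second, you recover $\bx\bar{\rho}^\beta\int_0^T\|w\|_{L^2}^2\md t$ from $\|\cl u-2w\|_{L^2}^2$ plus the $\na u$ dissipation, which the paper simply calls direct.
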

\pf First, it follows from (\ref{3.5}) that
\begin{equation}\label{3.7}
\frac{\bar{\rho}}{2}\leq \rho\leq \frac{3\bar{\rho}}{2}.
\end{equation}


Then, multiplying $(\ref{1.1})_2,$ $(\ref{1.1})_3$ by $u$ and $w$ respectively,  integrating the resultant equations, we obtain that after using $(\ref{1.1})_1$ and integration by parts that
\begin{align}
&\frac{\md}{\md t}\left(\frac{1}{2}\|\sqrt{\rho}u\|_{L^2}^2+\frac{1}{2}\|\sqrt{\rho}w\|_{L^2}^2+\|G(\rho)\|_{L^1}\right)+\frac{\mu_1}{2^\alpha}{\bar{\rho}}^\alpha\|\nabla u\|_{L^2}^2+\frac{\mu_2}{2^\alpha}{\bar{\rho}}^\alpha\|\nabla w\|_{L^2}^2\nonumber\\
&\quad+\frac{\mu_1+\lambda_1}{2^\alpha}{\bar{\rho}}^\alpha\|{\rm div} u\|_{L^2}^2+\frac{\mu_2+\lambda_2}{2^\alpha}{\bar{\rho}}^\alpha\|{\rm div} w\|_{L^2}^2+\frac{\bar{\xi}}{2^\alpha}{\bar{\rho}}^\beta\|{\rm curl}u-2w\|_{L^2}^2\nonumber\\
&\leq\frac{\md}{\md t}\left(\frac{1}{2}\|\sqrt{\rho}u\|_{L^2}^2+\frac{1}{2}\|\sqrt{\rho}w\|_{L^2}^2+\|G(\rho)\|_{L^1}\right)\nonumber\\
&\quad+\int\left(2\left(\mu_1{\rho}^\alpha+\bx\rho^\beta\right)|d (u)|^2+\mu_2{\rho}^\alpha|\nabla w|^2\right)\md x+\int\lambda_1{\rho}^\alpha(\dg u)^2\md x\nonumber\\
&\quad+\int(\mu_2+\lambda_2){\rho}^\alpha(\dg w)^2\md x+\int{\bar{\xi}}{\rho}^\beta|{\rm curl}u-2w|^2\md x=0,\label{3.9}
\end{align}
where
\begin{equation}\label{3.10}
G(\rho)=\rho\int_{\br}^{\rho}\frac{P(s)-P(\br)}{s^2}\md s.
\end{equation}

Integrating (\ref{3.9}) over $(0,T]$, (\ref{3.7}) and H$\rm{\ddot{o}}$lder inequality leads to
\begin{align}
&\sup_{t\in[0,T]}\left(\bar{\rho}\|u\|_{L^2}^2+\bar{\rho}\|w\|_{L^2}^2+{\bar{\rho}}^{\gamma-2}\|\rho-\bar{\rho}\|_{L^2}^2\right)+{\bar{\rho}}^\alpha\int_0^T\left(\|\nabla u\|_{L^2}^2+\|\nabla w\|_{L^2}^2\right)\md t \nonumber\\
&\qquad+\bx{\bar{\rho}}^\beta\int_0^T\|{\rm curl}u-2w\|_{L^2}^2\md t\nonumber\\
&\quad \leq C\left(\bar{\rho}\|u_0\|_{L^2}^2+\bar{\rho}\|w_0\|_{L^2}^2+\|G(\rho_0)\|_{L^1}\right),\label{3.11}
\end{align}
where we have used the fact
$$G(\rho)\sim {\bar{\rho}}^{\gamma-2}(\rho-\bar{\rho})^2$$
from (\ref{3.7}) and (\ref{3.10}). (\ref{3.6}) follows (\ref{3.11}) directly.
\hfill $\square$

Next, following $(\ref{1.12})$ and $(\ref{1.13})$ that
\begin{equation}\label{3.12}
\left\{\begin{aligned}
&(2\mu_1+\lambda_1)\nabla\dg u-\mu_1\cl\cl u=\nabla \mathcal{P}+H_1,\\
&(2\mu_2+\lambda_2)\nabla\dg w-\mu_2\cl\cl w=H_2,
\end{aligned}\right.
\end{equation}
where
\be\label{3.13}
\nabla \mathcal{P}=\left\{
\begin{aligned}
&\frac{A\gamma}{\gamma-\alpha}\nabla\rho^{\gamma-\alpha},\quad&\gamma\neq\alpha,\\
&A\gamma\nabla{\rm ln}\rho,\quad&\gamma=\alpha,
\end{aligned}
\right.
\ee
and

\begin{align}
H_1\triangleq&\rho^{1-\alpha} \dot u-\mu_1\alpha\rho^{-1}\nabla\rho\otimes d(u)-\lambda_1\alpha\rho^{-1}\dg u\nabla\rho \nonumber\\
&+A\gamma\rho^{\gamma-\alpha-1}\nabla\rho-\rho^{-\alpha}\dg (2\bx\rho^\beta d(u))+\rho^{-\alpha}\cl(\bx\rho^\beta\cl u)\nonumber\\
&-2\bx\beta\rho^{\beta-\alpha-1}\na\rho\times w-2\bx\rho^{\beta-\alpha}\cl w,\label{3.14}\\
H_2\triangleq&\rho^{1-\alpha}\dot{w}-\mu_2\alpha\rho^{-1}\nabla\rho\cdot\nabla w-(\mu_2+\lambda_2)\alpha\rho^{-1}\nabla\rho\dg w\nonumber\\
&-2\bar{\xi}\rho^{\beta-\alpha}\cl u+4\bar{\xi}\rho^{\beta-\alpha} w.\label{3.15}
\end{align}

To proceed, similar to the compressible Navier-Stokes equations (see, for example,\cite{HLX2012}, we introduce the effective viscous flux for the micropolar fuids related to the velocity, the micro-rotational velocity, and the pressure:
\be\label{3.16}\left\{
\begin{aligned}
&F_1=(2\mu_1+\lambda_1)\dg u-\mathcal{P}(\rho)+\mathcal{P}({\bar{\rho}}),\quad &V_1=\mu_1\cl u,\\
&F_2=(2\mu_2+\lambda_2)\dg w,\quad &V_2=\mu_2\cl w,
\end{aligned}\right.\end{equation}
where $F_1$ and $F_2$ are the so-called "effective viscous flux", and $V_1$ and $V_2$ are the vorticity. Then (\ref{3.12}) and (\ref{3.16}) imply that
\begin{align}
\Delta F_1=\dg H_1,\quad \Delta V_1=\cl H_1,\nonumber\\
\Delta F_2=\dg H_2,\quad \Delta V_2=\cl H_2.\nonumber
\end{align}
According to the standard $L^2$-estimate of the elliptic system, after using (\ref{3.7}) and Sobolev inequality, we have
\begin{align}
&\|\nabla^2 u\|_{L^2}+\|\nabla^2 w\|_{L^2}\nonumber\\
&\leq C\left(\|\nabla F_1\|_{L^2}+\|\nabla V_1\|_{L^2}+{\bar{\rho}}^{-\alpha}\|\nabla P(\rho)\|_{L^2}+\|\nabla F_2\|_{L^2}+\|\nabla V_2\|_{L^2}\right)\nonumber\\
&\leq C\left(\|H_1\|_{L^2}+\|H_2\|_{L^2}+{\bar{\rho}}^{-\alpha}\|\nabla P(\rho)\|_{L^2}\right)\nonumber\\
&\leq C{\bar{\rho}}^{-\alpha+\frac{1}{2}}\|\sqrt{\rho}u_t\|_{L^2}+C{\bar{\rho}}^{-\alpha+1}\|\nabla u\|_{L^2}^{\frac{3}{2}}\|\nabla u\|_{L^6}^{\frac{1}{2}}+C{\bar{\rho}}^{-1}\|\nabla \rho\|_{L^q}\|\nabla u\|_{L^2}^{\frac{q-3}{q}}\|\nabla u\|_{L^6}^{\frac{3}{q}}\nonumber\\
&\quad+C{\bar{\rho}}^{-\alpha+\gamma-1}\|\nabla \rho\|_{L^2}+C{\bar{\rho}}^{-\alpha+\frac{1}{2}}\|\sqrt{\rho}w_t\|_{L^2}+C{\bar{\rho}}^{-\alpha+1}\|\nabla u\|_{L^2}\|\nabla w\|_{L^2}^{\frac{1}{2}}\|\nabla w\|_{L^6}^{\frac{1}{2}}\nonumber\\
&\quad+C{\bar{\rho}}^{-1}\|\nabla \rho\|_{L^q}\|\nabla w\|_{L^2}^{\frac{q-3}{q}}\|\nabla w\|_{L^6}^{\frac{3}{q}}+C\br^{\beta-\alpha}\|\nabla^2u\|_{L^2}\nonumber\\
&\quad+C\bx\br^{\beta-\alpha}(\|\nabla u\|_{L^2}+\|\nabla w\|_{L^2}+\|w\|_{L^2})+C\br^{\beta-\alpha-1}\|\na\rho\|_{L^3}\|w\|_{L^6}\nonumber\\
&\leq \frac{1}{4}\|\nabla^2 u\|_{L^2}+\frac{1}{2}\|\nabla^2 w\|_{L^2}+C_1\br^{\beta-\alpha}\|\na^2u\|_{L^2}+C\br^{-\alpha+\frac{1}{2}}\left(\|\sqrt{\rho}u_t\|_{L^2}+\|\sqrt{\rho}w_t\|_{L^2}\right)\nonumber\\
&\quad+C{\bar{\rho}}^{-2\alpha+2}\left(\|\nabla u\|_{L^2}^3+\|\nabla w\|_{L^2}^3\right)+C\br^{-\alpha+\gamma-1}\|\na\rho\|_{L^2}\nonumber\\
&\quad+C\br^{-\frac{q}{q-3}}\|\na\rho\|_{L^q}^{\frac{q}{q-3}}\left(\|\na u\|_{L^2}+\|\na w\|_{L^2}\right)+C\bx\br^{\beta-\alpha}(\|\nabla u\|_{L^2}+\|\nabla w\|_{L^2}+\| w\|_{L^2}).\label{3.17}
\end{align}
where we have used
$$\|\nabla F_1\|_{L^2}+\|\nabla V_1\|_{L^2}\leq C\|H\|_{L^2},\quad \|\nabla F_2\|_{L^2}+\|\nabla V_2\|_{L^2}\leq C\|H_2\|_{L^2}.$$

Similarly, the standard $L^p-$estimate of the elliptic system and Gagliardo-Nirenberg inequality Lemma \ref{lem2.2} gives that
\begin{align}
&\|\nabla^2 u\|_{L^q}+\|\nabla^2 w\|_{L^q}\nonumber\\
&\leq C\left(\|\nabla F_1\|_{L^q}+\|\nabla V_1\|_{L^q}+{\bar{\rho}}^{-\alpha}\|\nabla P(\rho)\|_{L^q}+\|\nabla F_2\|_{L^q}+\|\nabla V_2\|_{L^q}\right)\nonumber\\
&\leq C{\bar{\rho}}^{-\alpha}\|\rho u_t\|_{L^q}+C{\bar{\rho}}^{-\alpha+1}\| u\|_{L^q}\|\nabla u\|_{L^\infty}+C{\bar{\rho}}^{-1}\|\nabla \rho\|_{L^q}\|\nabla u\|_{L^\infty}\nonumber\\
&\quad+C{\bar{\rho}}^{-\alpha+\gamma-1}\|\nabla \rho\|_{L^q}+C{\bar{\rho}}^{-\alpha}\|\rho w_t\|_{L^q}+C{\bar{\rho}}^{-\alpha+1}\| u\|_{L^q}\|\nabla w\|_{L^\infty}\nonumber\\
&\quad+C{\bar{\rho}}^{-1}\|\nabla \rho\|_{L^q}\|\nabla w\|_{L^\infty}+C\br^{\beta-\alpha}\|\nabla^2u\|_{L^q}+C\bx\br^{\beta-\alpha}(\|\nabla u\|_{L^q}+\|\nabla w\|_{L^q}+\|w\|_{L^q})\nonumber\\
&\leq C{\bar{\rho}}^{-\alpha}\|\rho u_t\|_{L^q}+C{\bar{\rho}}^{-\alpha+1}\| u\|_{L^q}\|\nabla u\|_{L^\infty}+C{\bar{\rho}}^{-1}\|\nabla \rho\|_{L^q}\|\nabla u\|_{L^\infty}\nonumber\\
&\quad+C{\bar{\rho}}^{-\alpha+\gamma-1}\|\nabla \rho\|_{L^q}+C{\bar{\rho}}^{-\alpha}\|\rho w_t\|_{L^q}+C{\bar{\rho}}^{-\alpha+1}\| u\|_{L^q}\|\nabla w\|_{L^\infty}\nonumber\\
&\quad+C{\bar{\rho}}^{-1}\|\nabla \rho\|_{L^q}\|\nabla w\|_{L^\infty}+C\br^{\beta-\alpha}\|\nabla^2u\|_{L^q}\nonumber\\
&\quad+C\br^{\beta-\alpha}(\|\nabla u\|_{L^2}^{\theta}\|\nabla^2 u\|_{L^q}^{1-\theta}+\|\nabla w\|_{L^2}^{\theta}\|\nabla^2 w\|_{L^q}^{1-\theta}+\|w\|_{L^q})\nonumber\\
&\leq C_2\br^{\beta-\alpha}\|\na^2u\|_{L^q}+C\br^{-\alpha}\left(\|\rho u_t\|_{L^q}+\|\rho w_t\|_{L^q}\right)\nonumber\\
&\quad+C{\bar{\rho}}^{-\alpha+1}\| u\|_{L^q}\|\nabla u\|_{L^\infty}+C{\bar{\rho}}^{-1}\|\nabla \rho\|_{L^q}\|\nabla u\|_{L^\infty}\nonumber\\
&\quad+C{\bar{\rho}}^{-\alpha+\gamma-1}\|\nabla \rho\|_{L^q}+C{\bar{\rho}}^{-\alpha+1}\| u\|_{L^q}\|\nabla w\|_{L^\infty}
+C{\bar{\rho}}^{-1}\|\nabla \rho\|_{L^q}\|\nabla w\|_{L^\infty}\nonumber\\
&\quad+C\bx^{\frac{1}{\theta}}\br^{(\beta-\alpha)\frac{1}{\theta}}\left(\|\nabla u\|_{L^2}+\|\nabla w\|_{L^2}\right)+C\bx\br^{\beta-\alpha}\|w\|_{L^q}.\label{3.18}
\end{align}
where
\begin{equation}\label{3.19}
\theta=\frac{2q}{5q-6}.
\end{equation}


As a consequence, we have the following estimates, which will be used in the following proofs frequently.
\begin{lem}\label{L2}
Let
\begin{equation}\label{3.20}
\alpha>1,\,\, 0\leq \beta< \alpha,\,\,\frac{b}{2}\leq \min\left\{1, \alpha-\gamma+1,\frac{\alpha-\gamma+2}{2}\right\},
\end{equation}
Under the assumptions that (\ref{3.3}), then there exists a positive constant $M_1$ such that
\begin{align}
&\sup_{t\in[0,T]}\left(\|\na u\|_{L^6}+\|\na w\|_{L^6}\right)\leq C,\label{3.21}\\
&\int_0^T\left(\|\na u\|_{L^6}^2+\|\na w\|_{L^6}^2\right)\md t\leq C,\label{3.22}
\end{align}
provided $\bar{\rho}\geq M_1(\mu_1,\,\mu_2,\,\alpha,\,\beta,\,\bx)$, where $C=C(A,\,\mu_1,\,\mu_2,\,\lambda_1,\,\lambda_2,\,\bx,$ $\alpha,\,\beta,\,\gamma,$ $\|u_0\|_{H^2},$ $\|w_0\|_{H^2},$ $\|G(\rho_0\|_{L^1},$ $\|\na\rho_0\|_{L^2},$ $\|\na\rho_0\|_{L^q}).$
\end{lem}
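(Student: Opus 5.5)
The plan is to combine the $L^2$ elliptic estimate (\ref{3.17}) with the a priori bounds (\ref{3.3}), and to read off $\|\na u\|_{L^6}+\|\na w\|_{L^6}$ through the Sobolev inequality $\|\na f\|_{L^6}\le C\|\na^2 f\|_{L^2}$.

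\emph{Absorbing the second-derivative terms.} In (\ref{3.17}) the terms $\frac14\|\na^2u\|_{L^2}+\frac12\|\na^2w\|_{L^2}$ move to the left-hand side. For the microrotation term, choose $M_1$ so large that $C_1\br^{\beta-\alpha}\le\frac14$ whenever $\br\ge M_1$; this is possible because $\beta<\alpha$, and $C_1$ depends only on $\mu_1,\mu_2,\bx,\alpha,\beta$. After absorption we get
\begin{align*}
\|\na^2u\|_{L^2}+\|\na^2w\|_{L^2}\le{}& C\br^{-\alpha+\frac12}\left(\|\sqrt\rho u_t\|_{L^2}+\|\sqrt\rho w_t\|_{L^2}\right)+C\br^{-2\alpha+2}\left(\|\na u\|_{L^2}^3+\|\na w\|_{L^2}^3\right)\\
&+C\br^{-\alpha+\gamma-1}\|\na\rho\|_{L^2}+C\br^{-\frac{q}{q-3}}\|\na\rho\|_{L^q}^{\frac{q}{q-3}}\left(\|\na u\|_{L^2}+\|\na w\|_{L^2}\right)\\
&+C\br^{\beta-\alpha}\left(\|\na u\|_{L^2}+\|\na w\|_{L^2}+\|w\|_{L^2}\right).
\end{align*}

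\emph{Uniform-in-time bound (\ref{3.21}).} Insert the hypotheses (\ref{3.3}) term by term. From $\varepsilon_{u,w,2}$ we have $\|\sqrt\rho u_t\|_{L^2}\le C\br^{\alpha-\frac12}$, so the first term is $O(1)$. From $\varepsilon_{u,w,1}$ we have $\|\na u\|_{L^2}+\|\na w\|_{L^2}\le C$, so the cubic term is $O(\br^{2-2\alpha})=O(1)$ since $\alpha>1$. From $\varepsilon_{\rho,3}$ we have $\|\na\rho\|_{L^2}\le C\br^{b/2}$, so the third term is $O(\br^{\gamma-1-\alpha+b/2})=O(1)$ because $b/2\le\alpha-\gamma+1$. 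From $\varepsilon_{\rho,2}$ we have $\|\na\rho\|_{L^q}\le C\br^{b/2}$, so the fourth term is $O(\br^{(b/2-1)\frac{q}{q-3}})=O(1)$ because $b/2\le1$. The last term uses (\ref{3.6}), which gives $\|w\|_{L^2}\le C$, together with $\beta<\alpha$. This yields (\ref{3.21}).

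\emph{Time-integrated bound (\ref{3.22}).} Square the absorbed inequality and integrate over $(0,T)$.
\begin{itemize}
\item The $u_t,w_t$ term gives $\br^{1-2\alpha}\int_0^T\|\sqrt\rho u_t\|_{L^2}^2\,\md t\le C\br^{1-2\alpha}\br^{\alpha}=C\br^{1-\alpha}\le C$, using $\varepsilon_{u,w,1}$.
\item The terms carrying a factor of $\|\na u\|_{L^2}^2$ or $\|\na w\|_{L^2}^2$ are controlled by Lemma \ref{L1}, which gives $\int_0^T\left(\|\na u\|_{L^2}^2+\|\na w\|_{L^2}^2\right)\md t\le C\br^{1-\alpha}$, combined with the sup bounds above.
\item The pressure term gives $\br^{2\gamma-2-2\alpha}\int_0^T\|\na\rho\|_{L^2}^2\,\md t\le C\br^{2\gamma-2-2\alpha}\br^{b+\alpha-\gamma}=C\br^{\gamma-2-\alpha+b}$. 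This is bounded precisely because $b/2\le\frac{\alpha-\gamma+2}{2}$, which is where that condition in (\ref{3.20}) enters.
\item The $w$ term uses $\br^{2(\beta-\alpha)}\int_0^T\|w\|_{L^2}^2\,\md t\le C\br^{2\beta-2\alpha}\br^{1-\beta}$. This is bounded provided $\beta+1\le2\alpha$, which holds since $\beta<\alpha$ and $\alpha>1$.
\end{itemize}

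\emph{Main obstacle.} The delicate step is the exponent bookkeeping for the time integrals, especially the pressure term and the coupling terms involving $w$ and $\br^{\beta}$. Each exponent must come out nonpositive under exactly (\ref{3.20}) and $\beta<\alpha$. Alongside this, $M_1$ has to be chosen depending only on the constant $C_1$ of (\ref{3.17}), so that the absorption step does not introduce any circular dependence on the solution.
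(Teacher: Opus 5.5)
Your proposal is correct and is essentially the paper's own proof: you absorb the second-derivative terms of (\ref{3.17}) by taking $\br\ge M_1$ so that $C_1\br^{\beta-\alpha}$ is small (the paper takes $M_1=(2C_1)^{\frac{1}{\alpha-\beta}}$), then insert (\ref{3.3}) and (\ref{3.6}) term by term, once pointwise in time for (\ref{3.21}) and once after squaring and integrating for (\ref{3.22}). Your exponent bookkeeping matches the paper's, including $\br^{\gamma-2-\alpha+b}$ for the pressure term and $\br^{\beta+1-2\alpha}$ for the $w$ term, with the same conditions taken from (\ref{3.20}).
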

\pf
Combing (\ref{3.3}), (\ref{3.6}) with (\ref{3.17}), and taking $\br\geq M_1\triangleq \left(2C_1\right)^{\frac{1}{\alpha-\beta}}$ gives that
\begin{align}
&\|\nabla u\|_{L^6}+\|\nabla w\|_{L^6}\nonumber\\
&\leq C\br^{-\alpha+\frac{1}{2}}\left(\|\sqrt{\rho}u_t\|_{L^2}+\|\sqrt{\rho}w_t\|_{L^2}\right)\nonumber\\
&\quad+C{\bar{\rho}}^{-2\alpha+2}\left(\|\nabla u\|_{L^2}^3+\|\nabla w\|_{L^2}^3\right)+C\br^{-\alpha+\gamma-1}\|\na\rho\|_{L^2}\nonumber\\
&\quad+C\br^{-\frac{q}{q-3}}\|\na\rho\|_{L^q}^{\frac{q}{q-3}}\left(\|\na u\|_{L^2}+\|\na w\|_{L^2}\right)+C\br^{\beta-\alpha}(\|\nabla u\|_{L^2}+\|\nabla w\|_{L^2}+\| w\|_{L^2})\nonumber\\
&\leq C+C{\bar{\rho}}^{-2\alpha+2}+C{\bar{\rho}}^{-\alpha+\gamma-1+\frac{b}{2}}+C\br^{-\frac{q}{q-3}\left(1-\frac{b}{2}\right)}+C\br^{\beta-\alpha}\leq C\label{3.23}
\end{align}
since $\alpha>1,\,b\leq 2, \,\beta<\alpha$ and
$$-\alpha+\gamma-1+\frac{b}{2}\leq 0$$
due to (\ref{3.20}).

Next, also taking $\br\geq M_1$, it follows (\ref{3.3}), (\ref{3.6}) and (\ref{3.17}) that
\begin{align}
&\int_0^T\left(\|\nabla u\|_{L^6}^2+\|\nabla w\|_{L^6}^2\right)\md t\nonumber\\
&\leq C\br^{-2\alpha+1}\int_0^T\left(\|\sqrt{\rho}u_t\|_{L^2}^2+\|\sqrt{\rho}w_t\|_{L^2}^2\right)\md t\nonumber\\
&\quad+C{\bar{\rho}}^{-4\alpha+4}\int_0^T\left(\|\nabla u\|_{L^2}^2+\|\nabla w\|_{L^2}^2\right)\md t+C\br^{2(-\alpha+\gamma-1)}\int_0^T\|\na\rho\|_{L^2}^2\md t\nonumber\\
&\quad+C\br^{-\frac{2q}{q-3}}\int_0^T\|\na\rho\|_{L^q}^{\frac{2q}{q-3}}\left(\|\na u\|_{L^2}^2+\|\na w\|_{L^2}^2\right)\md t\nonumber\\
&\quad+C\br^{2(\beta-\alpha)}\int_0^T\left(\|\nabla u\|_{L^2}^2+\|\nabla w\|_{L^2}^2+\| w\|_{L^2}^2\right)\md t\nonumber\\
&\leq C\left(\br^{1-\alpha}+{\bar{\rho}}^{5-5\alpha}+C{\bar{\rho}}^{-\alpha+\gamma-2+b}+\br^{\frac{q}{q-3}(b-2)+1-\alpha}+\br^{2(\beta-\alpha)+1-\alpha}+\br^{\beta-\alpha+1-\alpha}\right)\nonumber\\
&\leq C\br^{1-\alpha}+C{\bar{\rho}}^{-\alpha+\gamma-2+b}\leq C\label{3.24}
\end{align}
since $\alpha>1,\,b\leq 2,\,\beta<\alpha$ and
$$
-\alpha+\gamma-2+b\leq 0$$
due to (\ref{3.20}).
\hfill $\square$

Now, let us concentrate on the estimate of velocities.
\begin{lem}\label{L3}
Let
\begin{equation}\label{3.25}
\alpha>\max\left\{1,\frac{\gamma+1}{2}\right\},\,\, 0\leq \beta<\alpha,\,\,b<2.
\end{equation}
Under the assumptions that (\ref{3.3}), then there exists a positive constant $M_2$ such that
\begin{equation}\label{3.26}
\varepsilon_{u,w,1}\leq 2^{\alpha+1}N_3\br^\alpha
\end{equation}
provided $\bar{\rho}\geq M_2(A,\alpha,\,\beta,\,\gamma,$ $\mu_1,\,\mu_2,\,\lambda_1,\,\lambda_2$, $\bx$, $G(\rho_0)$, $\|\nabla\rho_0\|_{L^2}$, $\|\nabla\rho_0\|_{L^q}$, $\|u_0\|_{H^2}$, $\|w_0\|_{H^2})$. Here $N_3$ is defined in (\ref{3.32}).
\end{lem}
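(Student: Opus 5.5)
We prove Lemma \ref{L3} by the standard $H^1$-energy method: multiply $(\ref{1.1})_2$ by $u_t$ and $(\ref{1.1})_3$ by $w_t$, and integrate over $\R^3$. The viscous terms then produce the time derivative of the weighted Dirichlet energy
$$\frac{1}{2}\frac{\md}{\md t}\int\Big(2(\mu_1\rho^\alpha+\bx\rho^\beta)|d(u)|^2+\lambda_1\rho^\alpha(\dg u)^2+\mu_2\rho^\alpha|\na w|^2+(\mu_2+\lambda_2)\rho^\alpha(\dg w)^2+\bx\rho^\beta|\cl u-2w|^2\Big)\md x,$$
up to commutator terms involving $\rho_t$, together with the dissipation $\|\sqrt\rho u_t\|_{L^2}^2+\|\sqrt\rho w_t\|_{L^2}^2$. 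By (\ref{3.7}), the energy under the time derivative is bounded above and below by $\br^\alpha(\mu_1\|\na u\|_{L^2}^2+\mu_2\|\na w\|_{L^2}^2)$, with constants $2^{-\alpha}$ and $(3/2)^\alpha$, plus the $\bx\br^\beta$ terms. Because $\beta<\alpha$, these $\bx\br^\beta$ terms are lower order. After integrating in time, this produces the factor $2^{\alpha+2}$ in $\varepsilon_{u,w,1}$ and fixes the constant $N_3$, which depends on $\rho_0$, $\|\na u_0\|_{L^2}$ and $\|w_0\|_{H^1}$ through the initial energy. We take $N_3$ as a fixed multiple of that initial energy divided by $\br^\alpha$.

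The remaining terms on the right-hand side are handled as follows. Each should be bounded by $\frac14(\|\sqrt\rho u_t\|^2+\|\sqrt\rho w_t\|^2)$ plus a quantity whose time integral is $o(\br^\alpha)$ as $\br\to\infty$.

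\begin{enumerate}
\item \textbf{Convective terms.} For $\int\rho u\cdot\na u\cdot u_t$ and $\int\rho u\cdot\na w\cdot w_t$ we use H\"older in the form $\br^{1/2}\|u\|_{L^3}\|\na u\|_{L^6}\|\sqrt\rho u_t\|_{L^2}$, and then Lemma \ref{L2} together with (\ref{3.6}). Integrating in time gives a bound of order $\br\int_0^T\|u\|_{L^3}^2\md t$. Interpolating, $\|u\|_{L^3}^2\le\|u\|_{L^2}\|\na u\|_{L^2}$, and using (\ref{3.6}) this is of order $\br\cdot\br^{(1-\alpha)/2}\cdot\dots$, which is smaller than $\br^\alpha$ once $\alpha>1$.
\item \textbf{Pressure term.} We rewrite $\int\na P\cdot u_t$ as $-\frac{\md}{\md t}\int(P-P(\br))\dg u+\int P_t\dg u$, and use $P_t=-\dg(Pu)+(P-\rho P')\dg u$. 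The boundary-in-time term is at most $\varepsilon\br^\alpha\|\na u\|^2+C\br^{-\alpha}\|P-P(\br)\|_{L^2}^2$. By (\ref{3.6}), $\|P-P(\br)\|_{L^2}^2\le C\br^{2\gamma-2}\cdot\br^{3-\gamma}=C\br^{\gamma+1}$, so this term is of order $\br^{\gamma+1-\alpha}$. This is $o(\br^\alpha)$ exactly when $\alpha>\frac{\gamma+1}{2}$, which is where that assumption in (\ref{3.25}) enters. The time-integrated term $\int P_t\dg u$ is bounded by $\br^\gamma\int_0^T\|\na u\|_{L^2}^2\md t\le C\br^{\gamma+1-\alpha}$, the same order.
\item \textbf{Density commutator terms.} These come from moving $\partial_t$ off the viscosity coefficients and have the form $\int\alpha\rho^{\alpha-1}\rho_t|\na u|^2$. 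Since $\rho_t=-u\cdot\na\rho-\rho\dg u$, they are bounded by $\br^\alpha\|\na u\|_{L^\infty}\|\na u\|_{L^2}^2+\br^{\alpha-1}\|u\|_{L^\infty}\|\na\rho\|_{L^q}\|\na u\|_{L^{2q/(q-2)}}^2$.
\item \textbf{Coupling terms.} The terms $2\cl(\xi w)\cdot u_t$ and $2\xi\cl u\cdot w_t$ carry $\br^\beta$. Cauchy--Schwarz bounds them by $\br^{2\beta-1}(\|\na w\|^2+\|\na u\|^2)$, which is integrable in time by (\ref{3.6}), with total $\br^{2\beta-\alpha}$. The cross term $2\bx\beta\rho^{\beta-1}\na\rho\times w$ is estimated with $\|\na\rho\|_{L^q}$ from (\ref{3.3}). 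Both contributions are $o(\br^\alpha)$ because $\beta<\alpha$.
\end{enumerate}

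The main obstacle is the commutator term in item 3. It requires $\int_0^T\|\na u\|_{L^\infty}\|\na u\|_{L^2}^2\md t=o(1)$, but $\|\na u\|_{L^\infty}$ is only controlled through $\|\na^2u\|_{L^q}$ in (\ref{3.18}), and that bound involves $\|\rho u_t\|_{L^q}$. We first use Gagliardo--Nirenberg to write $\|\na u\|_{L^\infty}\le C\|\na u\|_{L^2}^{\theta}\|\na^2u\|_{L^q}^{1-\theta}$. We then bound $\|\rho u_t\|_{L^q}$ by $\br\|\na u_t\|_{L^2}$-type quantities and control these with the a priori bound $\varepsilon_{u,w,2}\le3N_4\br^{2\alpha-1}$ from (\ref{3.3}). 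Here the factor $\br^{-\alpha}$ in (\ref{3.18}) compensates for $\br^{\alpha-1/2}$, and the hypotheses $\varepsilon_{\rho,2}\le2N_1\br^b$ with $b<2$ handle the terms involving $\|\na\rho\|_{L^q}$. Collecting powers of $\br$, each is strictly negative under (\ref{3.25}).

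Finally, integrate over $(0,T)$, absorb the $\varepsilon$-terms, and choose $M_2$ large enough that the sum of all these $o(\br^\alpha)$ contributions is at most $2^{\alpha}N_3\br^\alpha$. This yields (\ref{3.26}).
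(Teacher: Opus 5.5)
Your overall scheme matches the paper's: test $(\ref{1.1})_2$ and $(\ref{1.1})_3$ with $u_t$ and $w_t$, keep $\int(P(\rho)-P(\br))\dg u\,\md x$ under the time derivative, and close with (\ref{3.3}) and $\br$ large. But step 1 does not close as written. After using $\sup_t\|\na u\|_{L^6}\le C$ you are left with $C\br\int_0^T\|u\|_{L^3}^2\md t\le C\br\int_0^T\|u\|_{L^2}\|\na u\|_{L^2}\md t$. Here (\ref{3.6}) controls only $\sup_t\|u\|_{L^2}$ and $\int_0^T\|\na u\|_{L^2}^2\md t$; there is no $L^2_tL^2_x$ bound on $u$. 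So the best you get is $C\br^{(3-\alpha)/2}T^{1/2}$. The suppressed factor in your ``$\br\cdot\br^{(1-\alpha)/2}\cdots$'' is $T^{1/2}$, and a $T$-dependent bound is useless for the continuation argument of Section 4. The repair uses what you already cite. Either put the supremum in time on $\|u\|_{L^3}^2\le C\|u\|_{L^2}\|\na u\|_{L^2}\le C$ (by (\ref{3.6}) and (\ref{3.3})) and integrate $\|\na u\|_{L^6}^2$ with (\ref{3.22}), which gives $C\br$. Or split as the paper does, $\br\|u\|_{L^6}^2\|\na u\|_{L^3}^2\le C\br\|\na u\|_{L^2}^3\|\na u\|_{L^6}\le C\br\|\na u\|_{L^2}^2$, whose time integral is $C\br^{2-\alpha}$.

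Step 2 also omits a term. Since $P_t=-u\cdot\na P-\gamma P\dg u$, the transport part gives $C\br^{\gamma-1}\|\na\rho\|_{L^q}\|u\|_{L^{2q/(q-2)}}\|\na u\|_{L^2}\le C\br^{\gamma}\|\na u\|_{L^2}^2+C\br^{\gamma-2}\|\na\rho\|_{L^q}^2$. Its time integral is $C\br^{\alpha+b-2}$ by $\varepsilon_{\rho,2}$, and this is the term that forces the strict inequality $b<2$ in (\ref{3.31}). In step 4, the coupling terms, together with $4\xi w\cdot w_t$ and $\cl(\xi\cl u)\cdot u_t$, already form the exact derivative of $\frac12\int\bx\rho^\beta|\cl u-2w|^2\md x$ that you placed in your energy. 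Only $\frac12\int\bx(\rho^\beta)_t|\cl u-2w|^2\md x$ remains, so estimating them again double counts.

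Step 3 is where you genuinely depart from the paper, and what you call the main obstacle is not one. The paper never uses $\|\na u\|_{L^\infty}$ in this lemma. It bounds $\br^{\alpha-1}\int\rho|\dg u|(|\na u|^2+|\na w|^2)\md x\le C\br^\alpha(\|\na u\|_{L^3}^3+\|\na u\|_{L^3}\|\na w\|_{L^3}^2)$ and interpolates $\|\na u\|_{L^3}^3\le C\|\na u\|_{L^2}^{3/2}\|\na u\|_{L^6}^{3/2}$. It then integrates in time using both halves of Lemma \ref{L2} (the supremum bound and (\ref{3.22})), which gives $C\br^{(\alpha+3)/4}=o(\br^\alpha)$. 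The $\na\rho\cdot u$ part is handled the same way with $\|\na\rho\|_{L^q}\le C\br^{b/2}$.

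Your $W^{2,q}$ route through (\ref{3.18}) can be completed; it is essentially the computation (\ref{3.51})--(\ref{3.55}) that the paper performs later for the density. As sketched, though, it is missing an ingredient: $\|\rho u_t\|_{L^q}$ with $q<6$ is not controlled by $\|\na u_t\|_{L^2}$ alone. To get $\int_0^T\|\rho u_t\|_{L^q}^2\md t\le C\br^{\alpha+1}$ uniformly in $T$, you must interpolate and use two bounds: the $L^2_t$ bound on $\|\sqrt\rho u_t\|_{L^2}$ from $\varepsilon_{u,w,1}$ in (\ref{3.3}), and the bound on $\int_0^T\|\na u_t\|_{L^2}^2\md t$ from $\varepsilon_{u,w,2}$. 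With that, $\int_0^T\|\na u\|_{L^\infty}^2\md t\le C(\br^{1-\alpha}+\br^{\gamma-\alpha-2+b}+\br^{1+\beta-2\alpha})$. The commutator term then contributes at most $C(\br+\br^{(\gamma-1+b)/2})$, which is $o(\br^\alpha)$ under (\ref{3.25}). The paper's route is much lighter: it needs only Lemma \ref{L2} and no $W^{2,q}$ estimate at all.
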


\pf Multiplying $(\ref{1.1})_2$ and $(\ref{1.1})_3$ by $u_t$ and $w_t$, respectively, and integrating by parts yields that
\begin{align}
&\frac{\md}{\md t}\int\rho^\alpha\left(\mu_1|d(u)|^2+\frac{\lambda_1}{2}(\dg u)^2+\frac{\mu_2}{2}|\na w|^2+\frac{\mu_2+\lambda_2}{2}(\dg w)^2\right)\md x\nonumber\\
&\quad+\frac{\md}{\md t}\int\bx\rho^\beta|d(u)|^2\md x+\frac{1}{2}\frac{\md}{\md t}\int\bx\rho^\beta\left|\cl u-2w\right|^2\md x\nonumber\\
&\quad+\int\rho\left(|u_t|^2+|w_t|^2\right)\md x\nonumber\\
&=\frac{\md}{\md t}\int \left(P(\rho)-P(\bar{\rho})\right)\dg u\md x-\int{\rho}u\cdot\na u\cdot u_t\md x\nonumber\\
&\quad-\int{\rho}u\cdot\na w\cdot w_t\md x+\int(\rho^\alpha)_t\left(\mu_1|d(u)|^2+\frac{\lambda_1}{2}(\dg u)^2\right)\md x\nonumber\\
&\quad+\frac{1}{2}\int(\rho^\alpha)_t\left(\mu_2|\na w|^2+(\mu_2+\lambda_2)(\dg w)^2\right)\md x+\int\bx(\rho^\beta)_t|d(u)|^2\md x\nonumber\\
&\quad+\frac{1}{2}\int\bx\left(\rho^\beta\right)_t\left|\cl u-2w\right|^2\md x-\int P_t\dg u\md x\nonumber\\
&\triangleq\frac{\md}{\md t}\int \left(P(\rho)-P(\bar{\rho})\right)\dg u\md x+\sum_{i=1}^{7}I_i.\label{3.27}
\end{align}

It follows from (\ref{3.3}), (\ref{3.6}), (\ref{3.7}), (\ref{3.21}), H${\rm \ddot{o}}$lder inequality and Gagliardo-Nirenberg inequality that
\begin{align}
I_1+I_2&\leq \frac{1}{4}\|\sqrt{\rho}u_t\|_{L^2}^2+\frac{1}{2}\|\sqrt{\rho}w_t\|_{L^2}^2+C\br\|u\|_{L^6}^2\|\na u\|_{L^3}^2+C\br\|u\|_{L^6}^2\|\na w\|_{L^3}^2\nonumber\\
&\leq \frac{1}{4}\|\sqrt{\rho}u_t\|_{L^2}^2+\frac{1}{2}\|\sqrt{\rho}w_t\|_{L^2}^2+C\br\|\na u\|_{L^2}^3\|\na u\|_{L^6}\nonumber\\
&\quad+C\br\|\na u\|_{L^2}^2\|\na w\|_{L^2}\|\na w\|_{L^6}\nonumber\\
&\leq \frac{1}{2}\|\sqrt{\rho}u_t\|_{L^2}^2+\frac{1}{2}\|\sqrt{\rho}w_t\|_{L^2}^2+C\br\|\na u\|_{L^2}^2,\label{3.28}
\end{align}
\begin{align}
\sum_{i=3}^{6}I_i&\leq C\br^{\alpha-1}\int|\rho\dg u+\na\rho\cdot u|\left(|\na u|^2+|\na w|^2\right)\md x\nonumber\\
&\quad+C\bx\br^{\beta-1}\int|\rho\dg u+\na\rho\cdot u||w|^2\md x\nonumber\\
&\leq C\br^{\alpha}\|\na u\|_{L^3}^3+C\br^{\alpha}\|\na u\|_{L^3}\|\na w\|_{L^3}^2\nonumber\\
&\quad+C\br^{\alpha-1}\|\na\rho\|_{L^q}\|u\|_{L^6}\|\na u\|_{L^{\frac{12q}{5q-6}}}^2+C\br^{\alpha-1}\|\na\rho\|_{L^q}\|u\|_{L^6}\|\na w\|_{L^{\frac{12q}{5q-6}}}^2\nonumber\\
&\quad+C\bx\br^{\beta}\|\na u\|_{L^2}\|w\|_{L^3}\|w\|_{L^6}+C\bx\br^{\beta-1}\|\na\rho\|_{L^3}\|u\|_{L^6}\| w\|_{L^6}^2\nonumber\\
&\leq C\br^{\alpha}\|\na u\|_{L^2}^{\frac{3}{2}}\|\na u\|_{L^6}^{\frac{3}{2}}+C\br^{\alpha}\|\na w\|_{L^2}^{\frac{3}{2}}\|\na w\|_{L^6}^{\frac{3}{2}}\nonumber\\
&\quad+C\br^{\alpha-1+\frac{b}{2}}\|\na u\|_{L^2}^{\frac{5q-6}{2q}}\|\na u\|_{L^6}^{\frac{q+6}{2q}}\nonumber\\
&\quad+C\br^{\alpha-1+\frac{b}{2}}\|\na u\|_{L^2}\|\na w\|_{L^2}^{\frac{3q-6}{2q}}\|\na w\|_{L^6}^{\frac{q+6}{2q}}\nonumber\\
&\quad+C\bx\br^{\beta}\|\na u\|_{L^2}\|w\|_{L^2}^{\frac{1}{2}}\|\na w\|_{L^2}^{\frac{3}{2}}+C\bx\br^{\beta-1+\frac{b}{2}}\|\nabla u\|_{L^2}\|\nabla w\|_{L^2}^2,\label{3.29}\end{align}
and
\begin{align}
I_7&\leq C\br^{\gamma-1}\int|\rho\dg u+\na\rho\cdot u||\dg u|\md x\nonumber\\
&\leq C\br^{\gamma}\|\na u\|_{L^2}^2+C\br^{\gamma-1}\|\na\rho\|_{L^q}\|u\|_{L^{\frac{2q}{q-2}}}\|\na u\|_{L^2}\nonumber\\
&\leq C\br^{\gamma}\|\na u\|_{L^2}^2+C\br^{\gamma-1}\|\na\rho\|_{L^q}\|u\|_{L^2}^{\frac{q-3}{q}}\|\na u\|_{L^2}^{\frac{q+3}{q}}\nonumber\\
&\leq C\br^{\gamma}\|\na u\|_{L^2}^2+C\br^{\gamma-1}\|\na\rho\|_{L^q}\|\na u\|_{L^2}\nonumber\\
&\leq C\br^{\gamma}\|\na u\|_{L^2}^2+C\br^{\gamma-2}\|\na\rho\|_{L^q}^2.\label{3.30}
\end{align}

Substituting (\ref{3.28})$-$(\ref{3.30}) into (\ref{3.27}), integrating with respect to $t$, and using (\ref{3.6}), (\ref{3.7}), (\ref{3.22}) and (\ref{3.25}) gives
\begin{align}
&\frac{\br^\alpha}{2^{\alpha+1}}\sup_{t\in[0,T]}\int\left(\mu_1|\na u|^2+\mu_2|\na w|^2+(\mu_1+\lambda_1)(\dg u)^2+(\mu_2+\lambda_2)(\dg w)^2\right)\nonumber\\
&\qquad+\frac{1}{2}\int_0^T\|\sqrt{\rho}u_t\|_{L^2}^2\md t+\frac{1}{2}\int_0^T\|\sqrt{\rho}w_t\|_{L^2}^2\md t\nonumber\\
&\quad\leq \sup_{t\in[0,T]}\int\rho^\alpha\left(\mu_1|d(u)|^2+\frac{\mu_2}{2}|\na w|^2+\frac{\lambda_1}{2}(\dg u)^2+\frac{\mu_2+\lambda_2}{2}(\dg w)^2\right)\nonumber\\
&\qquad+\sup_{t\in[0,T]}\int\bx\rho^\beta|d(u)|^2+\frac{1}{2}\sup_{t\in[0,T]}\int\bx\rho^\beta\left|\cl u-2w\right|^2\nonumber\\
&\qquad+\frac{1}{2}\int_0^T\|\sqrt{\rho}u_t\|_{L^2}^2\md t+\frac{1}{2}\int_0^T\|\sqrt{\rho}w_t\|_{L^2}^2\md t\nonumber\\
&\quad\leq N_32^\alpha\br^\alpha+\int(P(\rho)-P(\br))\dg u\md x\nonumber\\
&\qquad+C\br^\alpha\int_0^T\left(\|\na u\|_{L^2}^{\frac{3}{2}}\|\na u\|_{L^6}^{\frac{3}{2}}+\|\na w\|_{L^2}^{\frac{3}{2}}\|\na w\|_{L^6}^{\frac{3}{2}}\right)\md t\nonumber\\
&\qquad+C\br^{\alpha}\int_0^T\|\na u\|_{L^2}^{\frac{5q-6}{2q}}\|\na u\|_{L^6}^{\frac{q+6}{2q}}\md t+C\br^{\alpha}\int_0^T\|\na u\|_{L^2}\|\na w\|_{L^2}^{\frac{3q-6}{2q}}\|\na u\|_{L^6}^{\frac{q+6}{2q}}\md t\nonumber\\
&\qquad+C\br^{\beta}\int_0^T\left(\|\na u\|_{L^2}^2+\|\na w\|_{L^2}^2\right)\md t\nonumber\\
&\qquad+C\br^{\gamma}\int_0^T\|\na u\|_{L^2}^2\md t+C\br^{\gamma-2}\int_0^T\|\na \rho\|_{L^q}^2\md t\nonumber\\
&\quad\leq N_3 2^\alpha\br^\alpha+\|P(\rho)-P(\br)\|_{L^2}\|\dg u\|_{L^2}\nonumber\\
&\qquad+C\left(\br^{\frac{\alpha+3}{4}}+\br^{\beta+1-\alpha}+\br^{\gamma+1-\alpha}+\br^{\alpha+b-2}\right)\nonumber\\
&\quad\leq N_3 2^\alpha\br^\alpha+C\br^{\gamma-1}\|\rho-\br\|_{L^2}+C\left(\br^{\frac{\alpha+3}{4}}+\br^{\gamma+1-\alpha}+\br^{\alpha+b-2}\right)\nonumber\\
&\quad \leq N_3 2^\alpha\br^\alpha+C_3\left(\br^{\frac{\gamma+1}{2}}+\br^{\frac{\alpha+3}{4}}+\br^{\gamma+1-\alpha}+\br^{\alpha+b-2}\right)\leq N_3 2^{\alpha+1}\br^\alpha,\label{3.31}
\end{align}
after defining
\begin{align}
N_3\triangleq &2\mu_1\|d( u_0)\|_{L^2}^2+\mu_2\|\na w_0\|_{L^2}^2+\lambda_1\|\dg u_0\|_{L^2}^2+(\mu_2+\lambda_2)\|\dg w_0\|_{L^2}\nonumber\\
&+2\bx\rho_0^{\beta-\alpha}\|d( u_0)\|_{L^2}^2+\bx\rho_0^{\beta-\alpha}\|\cl u_0-2w_0\|_{L^2}^2,\label{3.32}\end{align}
and choosing $M_2$ such that
\begin{align*}
\br\geq M_2\triangleq \max&\left\{M_1,\,\,\left(\frac{4C_3}{2^\alpha N_3}\right)^{\frac{2}{2\alpha-\gamma-1}},\,\,\left(\frac{4C_3}{2^\alpha N_3}\right)^{\frac{4}{3(\alpha-1)}}\left(\frac{4C_3}{2^\alpha N_3}\right)^{\frac{1}{2\alpha-\gamma-1}},\,\left(\frac{4C_3}{2^\alpha N_3}\right)^{\frac{1}{2-b}}\right\},
\end{align*}
as long as $$\alpha>\max\left\{1,\frac{\gamma+1}{2}\right\},$$
$\beta<\alpha$ and $b<2$.  The proof of Lemma \ref{L3} is complete.
\hfill $\square$
\begin{lem}\label{L4}
Assume $$\alpha>\max\left\{1,\,\frac{\gamma+1}{2}\right\},\quad 0\leq \beta<\alpha,\quad b<2,$$
and $$\gamma-1\leq \alpha.$$
There exists a positive constant $M_3$, such that
$$\varepsilon_{u,w,2}(T)\leq 2N_4\br^{2\alpha-1},$$
provided that $\br>M_3(A,\alpha,\,\beta,\,\gamma,$ $\mu_1,\,\mu_2,\,\lambda_1,\,\lambda_2$, $\bx$, $G(\rho_0)$, $\|\nabla\rho_0\|_{L^2}$, $\|\nabla\rho_0\|_{L^q}$, $\|u_0\|_{H^2}$, $\|w_0\|_{H^2})$ and $N_4$ is defined in (\ref{3.44}).
\end{lem}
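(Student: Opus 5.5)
The natural route is the standard time-derivative estimate. Differentiate $(\ref{1.1})_2$ and $(\ref{1.1})_3$ in $t$, multiply by $u_t$ and $w_t$, and integrate by parts using $(\ref{1.1})_1$. This produces
\begin{align*}
&\frac12\frac{\md}{\md t}\left(\|\sqrt\rho u_t\|_{L^2}^2+\|\sqrt\rho w_t\|_{L^2}^2\right)+\int\rho^\alpha\left(2\mu_1|d(u_t)|^2+\lambda_1(\dg u_t)^2+\mu_2|\na w_t|^2+(\mu_2+\lambda_2)(\dg w_t)^2\right)\md x\\
&\quad+\int\bx\rho^\beta\left(2|d(u_t)|^2+|\cl u_t-2w_t|^2\right)\md x=\sum_j J_j .
\end{align*}
By (\ref{3.7}) and (\ref{1.3})–(\ref{1.4}), the dissipation is bounded below by $\frac{\br^\alpha}{2^{\alpha}}(\mu_1\|\na u_t\|_{L^2}^2+\mu_2\|\na w_t\|_{L^2}^2)$; the $\bx\rho^\beta$ terms are nonnegative and can be dropped. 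The right-hand side $\sum_j J_j$ collects four groups of terms:
\begin{itemize}
\item the convective terms $-\int\rho_t|u_t|^2$, $\int(\rho u\cdot\na u)_t\cdot u_t$, and their analogues for $w$, written via $\rho_t=-\dg(\rho u)$;
\item the coefficient-derivative terms $\int(\rho^\alpha)_t\,\na u:\na u_t$, $\int(\rho^\alpha)_t\,\na w:\na w_t$ and $\int(\rho^\beta)_t(\cdots)$;
\item the pressure term $\int P_t\,\dg u_t$;
\item the coupling terms $2\int(\bx\rho^\beta w)_t\cdot\cl u_t$, $2\int(\bx\rho^\beta\cl u)_t\cdot w_t$ and $-4\int(\bx\rho^\beta)_t|w|^2$-type contributions.
\end{itemize}

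Each $J_j$ is estimated by H\"older, Gagliardo–Nirenberg (Lemma \ref{lem2.2}), (\ref{3.7}), (\ref{3.21}), and the bound $|(\rho^\alpha)_t|\le C\br^{\alpha-1}(\br|\na u|+|\na\rho||u|)$. In every case the target is the form $\frac{\br^\alpha}{8\cdot 2^\alpha}(\mu_1\|\na u_t\|_{L^2}^2+\mu_2\|\na w_t\|_{L^2}^2)$ plus a remainder. The remainders I expect are:
\begin{itemize}
\item $C\br^{\alpha}\|\na u\|_{L^3}^2\|\na u_t\|_{L^2}$ and $C\br^{\alpha-1}\|\na\rho\|_{L^q}\|u\|_{L^\infty}\|\na u\|_{L^{2q/(q-2)}}\|\na u_t\|_{L^2}$, absorbed after Young into $C\br^{\alpha}\|\na u\|_{L^2}\|\na u\|_{L^6}^3+C\br^{\alpha-2+b}(\cdots)$;
\item the pressure term, giving $C\br^{2\gamma-\alpha}\|\na u\|_{L^2}^2+C\br^{2\gamma-\alpha-2}\|\na\rho\|_{L^q}^2\|u\|^2_{L^{2q/(q-2)}}$;
\item the convection terms, giving $C\br\|u\|_{L^6}^2\|\na u\|_{L^3}^2\|\na u_t\|_{L^2}$-type contributions together with $\|\sqrt\rho u_t\|_{L^2}^2\|\na u\|_{L^6}^2$-type contributions, which feed a Gronwall factor $\exp(C\int_0^T\|\na u\|_{L^6}^2)\le C$ by (\ref{3.22});
\item the microrotation couplings, bounded by $C\br^{2\beta-\alpha}(\|w_t\|_{L^2}^2+\|\na u_t\|^2_{L^2})$-type terms, so $\beta<\alpha$ lets the $\na u_t$ part be absorbed for $\br$ large.
\end{itemize}

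Integrating in time and using (\ref{3.3}), (\ref{3.6}) and Lemma \ref{L3}, I would aim to show that every remainder is $\le C\br^{\kappa}$ with $\kappa<2\alpha-1$. The relevant conditions are:
\begin{itemize}
\item the pressure remainder is $\br^{2\gamma-\alpha}\int_0^T\|\na u\|^2\le C\br^{2\gamma+1-2\alpha}$, which is fine iff $\gamma-1<\alpha$; the borderline $\gamma-1=\alpha$ needs the extra factor $\br^{\gamma-\alpha}$ built into $\varepsilon_{\rho,2}$, and that is where the hypothesis $\gamma-1\le\alpha$ enters;
\item the cubic terms, via (\ref{3.21}) and $\int\|\na u\|^2\le C\br^{1-\alpha}$, are $\le C\br$, which is fine since $\alpha>1$;
\item the $\na\rho$ terms use $b<2$.
\end{itemize}

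For the initial data, set $N_4$ as a bound for $\|\sqrt{\rho_0}u_t(0)\|^2+\|\sqrt{\rho_0}w_t(0)\|^2$ divided by $\br^{2\alpha-1}$. This follows from the equations at $t=0$, $u_t=-u\cdot\na u+\rho^{-1}(\text{viscous}+\na P+\cdots)$, which gives $\|\sqrt{\rho_0}u_t(0)\|^2\le C\br^{-1}\br^{2\alpha}(\|u_0\|_{H^2}^2+\cdots)+C\br^{2\gamma-1}\|\na\rho_0\|^2$, and the last term is $\le C\br^{2\alpha-1}$ because $\gamma\le\alpha$ or is absorbed similarly. Then choose $M_3\ge M_2$ so large that $C\br^{\kappa}\le N_4\br^{2\alpha-1}$, yielding $\varepsilon_{u,w,2}\le 2N_4\br^{2\alpha-1}$.

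The main obstacle is the bookkeeping of $\br$-powers, in particular the pressure contribution $\int P_t\dg u_t$ and the terms with $(\rho^\alpha)_t$ that involve $\na\rho$. If direct Young absorption of the pressure term fails, the fallback is to move it into a time derivative, $\frac{\md}{\md t}\int P_t\dg u$, and control $\int P_{tt}\dg u$ via the mass equation.
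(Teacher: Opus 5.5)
Your overall route is the paper's: differentiate in $t$, test with $u_t$ and $w_t$, estimate term by term with H\"older, Gagliardo--Nirenberg and Young, integrate in time against (\ref{3.3}) and (\ref{3.6}), and bound the data at $t=0$ from the equations. However, the closing step fails as written. You send the $\|\sqrt\rho u_t\|_{L^2}^2$-weighted convective terms into a Gronwall factor that you bound only by a constant $C$. What you then obtain is $\varepsilon_{u,w,2}(T)\le C\left(N_4\br^{2\alpha-1}+C\br^{\kappa}\right)$. With $N_4$ fixed by the initial data, as you and (\ref{3.44}) define it, no choice of $\br$ turns this into $2N_4\br^{2\alpha-1}$ unless the factor is below $2$, and improving $3N_4$ to $2N_4$ is the whole point of the bootstrap. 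Also, interpolation does not produce a coefficient $\|\na u\|_{L^6}^2$ there. You get, e.g., $\br^{3-3\alpha}\|\na u\|_{L^2}^4$ or $\br^{1-\alpha}\|\na u\|_{L^2}\|\na u\|_{L^6}$, and integrability on $(0,\infty)$ must be checked. The paper avoids Gronwall. In (\ref{3.34}) it inserts the bootstrap bound $\sup_t\|\sqrt\rho u_t\|_{L^2}^2\le 3N_4\br^{2\alpha-1}$ from (\ref{3.3}), so that $C\br^{3-3\alpha}\|\na u\|_{L^2}^4\|\sqrt\rho u_t\|_{L^2}^2\le C\br^{2-\alpha}\|\na u\|_{L^2}^2$. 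Its time integral $C\br^{3-2\alpha}$ is of lower order than $\br^{2\alpha-1}$. Alternatively, you may keep Gronwall if you show that its exponent carries a negative power of $\br$ (e.g. $\br^{3-3\alpha}\int_0^T\|\na u\|_{L^2}^4\md t\le C\br^{4-4\alpha}$), so that the factor is $1+o(1)$.

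Second, your exponent bookkeeping misplaces the hypotheses. At $t=0$ one has $\|\rho_0^{-1/2}\na P(\rho_0)\|_{L^2}^2\le C\br^{2\gamma-3}\|\na\rho_0\|_{L^2}^2$, not $C\br^{2\gamma-1}\|\na\rho_0\|_{L^2}^2$. Moreover, $\gamma\le\alpha$ is not assumed, so with your exponent the step fails, e.g., for $\gamma=4$, $\alpha=3.2$, which satisfies all hypotheses. With the correct exponent the requirement $2\gamma-3\le 2\alpha-1$ is exactly $\gamma-1\le\alpha$; this is where that hypothesis enters the paper's proof, cf. (\ref{3.42}). Conversely, the pressure remainder $\br^{2\gamma-\alpha}\int_0^T\|\na u\|_{L^2}^2\md t\le C\br^{2\gamma+1-2\alpha}$ lies below $\br^{2\alpha-1}$ if and only if $2\alpha>\gamma+1$, not $\gamma-1<\alpha$, and there is no borderline case to treat. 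The time-integrated part of $\varepsilon_{\rho,2}$ is needed only for the $\na\rho$-piece of $\int P_t\dg u_t$. That piece gives $C\br^{2\gamma-2-\alpha}\int_0^T\|\na\rho\|_{L^q}^2\md t\le C\br^{\gamma-2+b}$, of lower order because $b<2$ and $2\alpha>\gamma+1$. Likewise the cubic terms give $C\br^{(\alpha+1)/2}$ (Cauchy--Schwarz in time), not $C\br$; this is harmless for $\alpha>1$. With these corrections your argument closes as in the paper.
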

\pf Differentiating $(\ref{1.1})_2$ and $(\ref{1.1})_3$ with respect to $t$, multiplying the result equations by $u_t$ and $w_t$ respectively , and integrating by parts, we have\label{3.26}
\begin{align}
&\frac{1}{2}\frac{\md}{\md t}\int\left(\rho|u_t|^2+\rho|w_t|^2\right)\md x+\int\rho^\alpha\left(2\mu_1|d(u_t)|^2+\mu_2|\na w_t|^2\right)\md x\nonumber\\
&\qquad+\int\rho^\alpha\left(\lambda_1(\dg u_t)^2+(\mu_2+\lambda_2)(\dg w_t)^2\right)\md x\\
&\qquad+\int2\bx \rho^\beta |d(u_t)|^2\md x
+\bx\int\rho^\beta|\cl u_t-2w_t|^2\md x\nonumber\\
&\quad=\int\dg(\rho u)|u_t|^2\md x-\int\rho u_t\cdot\na u\cdot u_t\md x+\int\dg(\rho u)|w_t|^2\md x\nonumber\\
&\qquad-\int\rho u_t\cdot\na w\cdot w_t\md x+\int\dg(\rho u)u\cdot\na u\cdot u_t\md x+\int\dg(\rho u)u\cdot\na w\cdot w_t\md x\nonumber\\
&\qquad-\int2 \mu_1\left(\rho^\alpha\right)_td(u):d( u_t)\md x-\int \mu_2(\rho^\alpha)_t\na w:\na w_t\md x\nonumber\\
&\qquad-\int \lambda_1\left(\rho^\alpha\right)_t\dg u\dg u_t\md x-\int (\mu_2+\lambda_2)(\rho^\alpha)_t\dg w\dg w_t\md x\nonumber\\
&\qquad-\int 2\bx\left(\rho^\beta\right)_td(u):d( u_t)\md x-\frac{1}{2}\int\bx\left(\rho^\beta\right)_t\left(\left|\cl u-2w\right|^2\right)_t\md x\nonumber\\
&\qquad+\int P_t\dg u_t\md x\triangleq\sum_{i=1}^{13}J_i.\label{3.33}
\end{align}
It follows from H$\rm\ddot{o}$lder inequality, Gagliardo-Nirenberg inequality, (\ref{3.3}) and (\ref{3.6}) that
\begin{align}
&J_1+J_2+J_3+J_4\nonumber\\
&\leq C\int\left(\rho|\na u|+\rho|\na w|+|\na\rho\cdot u|\right)\left(|u_t|^2+|w_t|^2\right)\md x\nonumber\\
&\leq C\left(\br\|\na u\|_{L^2}+\br\|\na w\|_{L^2}+\|\na\rho\|_{L^q}\|u\|_{\frac{2q}{q-2}}\right)\left(\|u_t\|_{L^4}^2+\|w_t\|_{L^4}^2\right)\nonumber\\
&\leq C\left(\br^{\frac{3}{4}}\|\na u\|_{L^2}+\br^{\frac{3}{4}}\|\na w\|_{L^2}+\br^{-\frac{1}{4}}\|\na\rho\|_{L^q}\|u\|_{L^2}^{\frac{q-3}{q}}\|\na u\|_{L^2}^{\frac{3}{q}}\right)\nonumber\\
&\quad\times\left(\|\sqrt{\rho}u_t\|_{L^2}^{\frac{1}{2}}\|\na u_t\|_{L^2}^{\frac{3}{2}}+\|\sqrt{\rho}w_t\|_{L^2}^{\frac{1}{2}}\|\na w_t\|_{L^2}^{\frac{3}{2}}\right)\nonumber\\
&\leq \frac{\mu_1}{2^{\alpha+3}}\br^\alpha\|\na u_t\|_{L^2}^2+\frac{\mu_2}{2^{\alpha+3}}\br^\alpha\|\na w_t\|_{L^2}^2+C\left(\|\sqrt{\rho}u_t\|_{L^2}^2+\|\sqrt{\rho}w_t\|_{L^2}^2\right)\nonumber\\
&\quad\times\left(\br^{3-3\alpha}\|\na u\|_{L^2}^4+\br^{3-3\alpha}\|\na w\|_{L^2}^4+\br^{-3\alpha-1}\|\na\rho\|_{L^q}^4\|\na u\|_{L^2}^{\frac{12}{q}}\right)\nonumber\\
&\leq \frac{\mu_1}{2^{\alpha+3}}\br^\alpha\|\na u_t\|_{L^2}^2+\frac{\mu_2}{2^{\alpha+3}}\br^\alpha\|\na w_t\|_{L^2}^2+C\left(\|\sqrt{\rho}u_t\|_{L^2}^2+\|\sqrt{\rho}w_t\|_{L^2}^2\right)\nonumber\\
&\quad\times\left(\br^{3-3\alpha}\|\na u\|_{L^2}^2+\br^{3-3\alpha}\|\na w\|_{L^2}^2+\br^{-3\alpha-1+2b}\|\na u\|_{L^2}^{2}\right)\nonumber\\
&\leq \frac{\mu_1}{2^{\alpha+3}}\br^\alpha\|\na u_t\|_{L^2}^2+\frac{\mu_2}{2^{\alpha+3}}\br^\alpha\|\na w_t\|_{L^2}^2+C\br^{2-\alpha}\left(\|\na u\|_{L^2}^2+\|\na w\|_{L^2}^2\right),
\label{3.34}
\end{align}
where we have used $b<2.$ Similarly, we can deduce from (\ref{3.3}), (\ref{3.6}) and (\ref{3.21}) that
\begin{align}
J_5+J_6&\leq C\int|\rho\dg u+\na\rho\cdot u||u|\left(|\na u||u_t|+|\na w||w_t|\right)\md x\nonumber\\
&\leq C\br \left(\|u\|_{L^6}\|\na u\|_{L^3}^2\|u_t\|_{L^6}+\|u\|_{L^6}\|\na u\|_{L^3}\|\na w\|_{L^3}\|w_t\|_{L^6}\right)\nonumber\\
&\quad+\left(\|\na\rho\|_{L^q}\|u\|_{L^6}^2\|\na u\|_{\frac{2q}{q-2}}\|u_t\|_{L^6}+\|\na\rho\|_{L^q}\|u\|_{L^6}^2\|\na w\|_{\frac{2q}{q-2}}\|w_t\|_{L^6}\right)\nonumber\\
&\leq \frac{\mu_1}{2^{\alpha+3}}\br^\alpha\|\na u_t\|_{L^2}^2+\frac{\mu_2}{2^{\alpha+3}}\br^\alpha\|\na w_t\|_{L^2}^2\nonumber\\
&\quad+C\br^{2-\alpha}\|\na u\|_{L^2}^4\|\na u\|_{L^6}^2+\br^{2-\alpha}\|\na u\|_{L^2}^2\|\na w\|_{L^2}^2\|\na w\|_{L^6}^2\nonumber\\
&\quad+C\br^{-\alpha}\|\na \rho\|_{L^q}^2\left(\|\na u\|_{L^2}^2+\|\na w\|_{L^2}^2\right)\nonumber\\
&\leq \frac{\mu_1}{2^{\alpha+3}}\br^\alpha\|\na u_t\|_{L^2}^2+\frac{\mu_2}{2^{\alpha+3}}\br^\alpha\|\na w_t\|_{L^2}^2+C\br^{2-\alpha}\left(\|\na u\|_{L^2}^2+\|\na w\|_{L^2}^2\right)\label{3.35}
\end{align}
due to $b\leq 2$. Using $\beta< \alpha$, we can estimate $J_7-J_{11}$ as follows:
\begin{align}
\sum_{i=7}^{11}J_i&\leq C\br^{\alpha-1}\int|\rho\dg u+\na\rho\cdot u|\left(|\na u||\na u_t|+|\na w||\na w_t|\right)\md x\nonumber\\
&\leq C\br^\alpha\|\na u\|_{L^4}^2\|\na u_t\|_{L^2}+C\br^{\alpha-1}\|\na\rho\|_{L^q}\|u\|_{\frac{3q}{q-3}}\|\na u\|_{L^6}\|\na u_t\|_{L^2}\nonumber\\
&\quad+C\br^\alpha\|\na u\|_{L^4}\|\na w\|_{L^4}\|\na w_t\|_{L^2}+C\br^{\alpha-1}\|\na\rho\|_{L^q}\|u\|_{\frac{3q}{q-3}}\|\na w\|_{L^6}\|\na w_t\|_{L^2}\nonumber\\
&\leq \frac{\mu_1}{2^{\alpha+3}}\br^\alpha\|\na u_t\|_{L^2}^2+\frac{\mu_2}{2^{\alpha+3}}\br^\alpha\|\na w_t\|_{L^2}^2\nonumber\\
&\quad+C\br^{\alpha}\|\na u\|_{L^2}\|\na u\|_{L^6}^3+C\br^{\alpha}\|\na w\|_{L^2}\|\na w\|_{L^6}^3\nonumber\\
&\quad+C\br^{\alpha-2+b}\|\na u\|_{L^2}^{\frac{3(q-2)}{q}}\|\na u\|_{L^6}^{2+\frac{6-q}{q}}+C\br^{\alpha-2+b}\|\na u\|_{L^2}^{\frac{3(q-2)}{q}}\|\na u\|_{L^6}^{\frac{6-q}{q}}\|\na w\|_{L^6}^2.
\label{3.36}
\end{align}
Similarly,
\begin{align}
\int\bx\left(\rho^{\beta}\right)_t|w||w_t|\md x&\leq\int\bx\rho^{\beta-1}|\rho\dg u+\na\rho\cdot u||w||w_t|\md x\nonumber\\
&\quad\leq C\bx\br^{\beta}\|\na u\|_{L^2}\|w\|_{L^3}\|w_t\|_{L^6}\nonumber\\
&\qquad+C\bx\br^{\beta-1}\|\na \rho\|_{L^3}\|u\|_{L^6}\|w\|_{L^3}\|w_t\|_{L^6}\nonumber\\
&\quad\leq \frac{\mu_1}{2^{\alpha+3}}\br^\alpha\|\na w_t\|_{L^2}^2+C\bx^2\br^{2\beta-\alpha}\left(\|\na u\|_{L^2}^2+\|\na w\|_{L^2}^2\right),
\label{3.36'}
\end{align}
and $J_{13}$ is bounded by
\begin{align}
J_{13}&\leq C\br^{\gamma-1}\int|\rho\dg u+\na\rho\cdot u||\dg u_t|\md x\nonumber\\
&\leq C\br^\gamma\|\dg u\|_{L^2}\|\dg u_t\|_{L^2}+C\br^{\gamma-1}\|\na\rho\|_{L^q}\|u\|_{L^{\frac{2q}{q-2}}}\|\dg u_t\|_{L^2}\nonumber\\
&\leq \frac{\mu_1}{2^{\alpha+3}}\br^\alpha\|\na u_t\|_{L^2}^2+C\br^{2\gamma-\alpha}\|\na u\|_{L^2}^2+C\br^{2\gamma-2-\alpha}\|\na\rho\|_{L^q}^2\|u\|_{L^2}^{\frac{2(q-3)}{q}}\|\na u\|_{L^2}^{\frac{6}{q}}\nonumber\\
&\leq \frac{\mu_1}{2^{\alpha+3}}\br^\alpha\|\na u_t\|_{L^2}^2+C\br^{2\gamma-\alpha}\|\na u\|_{L^2}^2+C\br^{2\gamma-2-\alpha}\|\na\rho\|_{L^q}^2.
\label{3.37}
\end{align}

putting (\ref{3.34})-(\ref{3.37}) into (\ref{3.33}) and integrating the result inequality yields that
\begin{align}
&\frac{1}{2}\sup_{t\in[0,T]}\left(\|\sqrt{\rho}u_t\|_{L^2}^2+\|\sqrt{\rho}w_t\|_{L^2}^2\right)\md x+\frac{\br^\alpha}{2^\alpha}\int_0^T\left(\mu_1\|\na u_t\|_{L^2}^2+\mu_2\|\na w_t\|_{L^2}^2\right)\md t\nonumber\\
&\quad+\frac{\br^\alpha}{2^\alpha}\int_0^T\left((\mu_1+\lambda_1)\|\dg u_t\|_{L^2}^2+(\mu_2+\lambda_2)\|\dg w_t\|_{L^2}^2\right)\md t\nonumber\\
&\leq \frac{1}{2}\sup_{t\in[0,T]}\left(\|\sqrt{\rho}u_t\|_{L^2}^2+\|\sqrt{\rho}w_t\|_{L^2}^2\right)\md x\md t+\int_0^T\int\rho^\alpha\left(2\mu_1|d (u_t)|^2+\mu_2|\na w_t|^2\right)\md x\md t\nonumber\\
&\quad+\int_0^T\int\rho^\alpha\left(\lambda_1(\dg u_t)^2+(\mu_2+\lambda_2)(\dg w_t)^2\right)\md x\md t\nonumber\\
&\quad+\int_0^T\int 2\bx \rho^\beta |d(u_t)|^2\md x\md t+\bx\int_0^T\int\rho^\beta|\cl u_t-2w_t|^2\md x\md t\nonumber\\
&\leq \frac{1}{2}\left(\|\sqrt{\rho_0}u_{0t}\|_{L^2}^2+\|\sqrt{\rho_0}w_{0t}\|_{L^2}^2\right)+\frac{\br^\alpha}{2^{\alpha+1}}\int_0^T\left(\mu_1\|\na u_t\|_{L^2}^2+\mu_2\|\na w_t\|_{L^2}^2\right)\md t\nonumber\\
&\quad+C\br^{2\gamma-2-\alpha}\int_0^T\|\na\rho\|_{L^q}^2\md t+\br^\alpha\left(\int_0^T\|\na u\|_{L^2}^2\md t\right)^{\frac{1}{2}}+\br^\alpha\left(\int_0^T\|\na w\|_{L^2}^2\md t\right)^{\frac{1}{2}}\nonumber\\
&\quad+C\left(\br^{2-\alpha}+\br^{2\gamma-\alpha}+\br^{2\beta-\alpha}\right)\int_0^T\left(\|\na u\|_{L^2}^2+\|\na w\|_{L^2}^2\right)\md t\nonumber\\
&\leq \frac{1}{2}\left(\|\sqrt{\rho_0}u_{0t}\|_{L^2}^2+\|\sqrt{\rho_0}w_{0t}\|_{L^2}^2\right)+\frac{\br^\alpha}{2^{\alpha+1}}\int_0^T\left(\mu_1\|\na u_t\|_{L^2}^2+\mu_2\|\na w_t\|_{L^2}^2\right)\md t\nonumber\\
&\quad+C\left(\br^{\gamma-2+b}+\br^{\frac{\alpha+1}{2}}+\br^{3-2\alpha}+\br^{2\gamma+1-2\alpha}+\br^{2\beta+1-2\alpha}\right)\nonumber\\
&\leq \frac{1}{2}\left(\|\sqrt{\rho_0}u_{0t}\|_{L^2}^2+\|\sqrt{\rho_0}w_{0t}\|_{L^2}^2\right)+\frac{\br^\alpha}{2^{\alpha+1}}\int_0^T\left(\mu_1\|\na u_t\|_{L^2}^2+\mu_2\|\na w_t\|_{L^2}^2\right)\md t\nonumber\\
&\quad+C\left(\br^{\gamma-2+b}+\br^{\frac{\alpha+1}{2}}+\br^{2\gamma+1-2\alpha}\right),\label{3.39}
\end{align}
where we have used
\begin{equation*}
3-2\alpha<\frac{\alpha+1}{2},\quad 2\beta+1-2\alpha<\frac{\alpha+1}{2}
\end{equation*}
due to $\alpha>1$ and $\beta<\alpha$.

Define
\begin{align}
\rho_0^{1/2}u_{0t}&\triangleq\rho_0^{-1/2}\left(-\rho_0u_0\cdot\na u_0-\na P(\rho_0)+\dg\left(2(\mu_1\rho_0^\alpha+ \bx\rho_0^\beta)d(u_0)\right)\right.\nonumber\\
&\quad+\left.\nabla\left(\lambda_1\rho_0^\alpha\dg u_0\right)-\cl\left(\bx\rho_0^\beta\cl u_0\right)+2\bx\cl\left(\rho_0^\beta w_0\right)\right),\label{3.40}\\
\rho_0^{1/2}w_{0t}&\triangleq\rho_0^{-1/2}\left(-\rho_0u_0\cdot\na w_0+\dg\left(\mu_2\rho_0^\alpha \nabla u_0\right)\right.\nonumber\\
&\quad\left.+\nabla\left(\mu_2+\lambda_2)\rho_0^\alpha\dg w_0\right)+2\bx\rho_0^\beta\cl u_0-4\bx\rho_0^\beta w_0\right).\label{3.41}
\end{align}
Then it follows from (\ref{1.6}) that
\begin{align}
&\|\sqrt{\rho_0}u_{0t}\|_{L^2}+\|\sqrt{\rho_0}w_{0t}\|_{L^2}\nonumber\\
&\quad\leq  C\br^{\frac{1}{2}}\left(\|\na u_0\|_{L^2}+\|\na w_0\|_{L^2}\right)\cdot\left(\|\na u_0\|_{L^2}^{\frac{1}{2}}\|\na u_0\|_{L^6}^{\frac{1}{2}}+\|\na w_0\|_{L^2}^{\frac{1}{2}}\|\na w_0\|_{L^6}^{\frac{1}{2}}\right))\nonumber\\
&\qquad+C\br^{\gamma-\frac{3}{2}}\|\na\rho_0\|_{L^2}+\br^{\alpha-\frac{1}{2}}\|\na^2 u_0\|_{L^2}+\br^{\alpha-\frac{1}{2}}\|\na^2 w_0\|_{L^2}\nonumber\\
&\qquad+C\br^{\alpha-\frac{3}{2}}\|\na \rho_0\|_{L^q}\|\na u_0\|_{\frac{2q}{q-2}}+C\br^{\alpha-\frac{3}{2}}\|\na \rho_0\|_{L^q}\|\na w_0\|_{\frac{2q}{q-2}}\nonumber\\
&\qquad+C\br^{\beta-\frac{3}{2}}\|\na \rho_0\|_{L^3}\|w_0\|_{L^6}+C\br^{\beta-\frac{1}{2}}\left(\|\na u_0\|_{L^2}+\|w_0\|_{H^1}\right)\nonumber\\
&\quad\leq C\left(\br^{\frac{1}{2}}+\br^{\gamma-\frac{3}{2}}+\br^{\alpha-\frac{1}{2}}+C\br^{\beta-\frac{3}{2}+\frac{b}{2}}+C\br^{\beta-\frac{1}{2}}\right)\leq C_4\br^{\alpha-\frac{1}{2}},\label{3.42}
\end{align}
due to $\alpha>1,\,\gamma-1< \alpha,\,b<2$ and $\beta<\alpha$.

Combining (\ref{3.39}) and (\ref{3.42}) yields that
\begin{align}
&\sup_{t\in[0,T]}\left(\|\sqrt{\rho}u_t\|_{L^2}^2+\|\sqrt{\rho}w_t\|_{L^2}^2\right)\md x+\frac{\br^\alpha}{2^{\alpha}}\int_0^T\left(\mu_1\|\na u_t\|_{L^2}^2+\mu_2\|\na w_t\|_{L^2}^2\right)\md t\nonumber\\
&\quad\leq N_4\br^{2\alpha-1}+C_5\left(\br^{\gamma}+\br^{\frac{\alpha+1}{2}}+\br^{2\gamma+1-2\alpha}\right)\leq  2N_4\br^{2\alpha-1},\label{3.43}
\end{align}
after defining
\begin{align}N_4=N_4&\left(A,\,\mu_1,\,\lambda_1,\,\mu_2,\,\lambda_2,\,\bx,\,\alpha,\,\beta,\,\gamma,\,\|G(\rho_0)\|_{L^1},\,\|\na \rho_0\|_{L^2},\,\|\na \rho_0\|_{L^q},\right.\nonumber\\
&\,\,\|\na u_0\|_{H^1},\,\|w_0\|_{H^2}\Big{)}\triangleq C_4^2\label{3.44}\end{align}
and choosing
\begin{align*}\br\geq M_3\triangleq\max&\left\{M_2,\,\left(\frac{3C_5}{N_4}\right)^{\frac{1}{2\alpha-\gamma-1}}\left(\frac{3C_5}{N_4}\right)^{\frac{2}{3\alpha-3}},\,\,\left(\frac{3C_5}{N_4}\right)^{\frac{1}{2(2\alpha-\gamma-1)}}\right\}.\end{align*}
\hfill $\square$

Now, the next lemma is concentrated on the estimates of density.
\begin{lem}\label{L5}
Assume that
\begin{equation}\label{3.45}\left\{
\begin{aligned}
&3-\gamma\leq b<2,\quad 1<\gamma<3,\\
&b=0,\quad\gamma\geq 3,
\end{aligned}
\right.
\end{equation}
(\ref{1.7}) and (\ref{1.8}) hold. Then
$$\varepsilon_{\rho,2}(T)\leq N_1\br^b,\quad \varepsilon_{\rho,3}(T)\leq N_2\br^b,$$
provided $\br>M_4(A,\alpha,\,\beta,\,\gamma,$ $\mu_1,\,\mu_2,\,\lambda_1,\,\lambda_2$, $\bx$, $G(\rho_0)$, $\|\nabla\rho_0\|_{L^2}$, $\|\nabla\rho_0\|_{L^q}$, $\|u_0\|_{H^2}$, $\|w_0\|_{H^2})$, Here $N_1$ and $N_2$ are defined in (\ref{3.60}) and (\ref{3.65}), respectively.
\end{lem}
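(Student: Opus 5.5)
Differentiate the mass equation $(\ref{1.1})_1$ in space to obtain, for $p\in\{2,q\}$,
\[
(\partial_i\rho)_t+u\cdot\na\partial_i\rho+\partial_iu\cdot\na\rho+\dg u\,\partial_i\rho+\rho\,\partial_i\dg u=0 .
\]
Multiply by $|\na\rho|^{p-2}\partial_i\rho$, sum over $i$, and integrate by parts in the transport term. This gives
\[
\frac{\md}{\md t}\|\na\rho\|_{L^p}\le C\|\na u\|_{L^\infty}\|\na\rho\|_{L^p}+\rho\|\na\dg u\|_{L^p}.
\]
The key point is to \emph{not} bound $\na\dg u$ by $\na^2u$ directly. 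Instead we use the effective viscous flux $(\ref{3.16})$:
\[
(2\mu_1+\lambda_1)\na\dg u=\na F_1+\na\mathcal P,\qquad \na\mathcal P=A\gamma\rho^{\gamma-\alpha-1}\na\rho .
\]
Since $\rho\sim\br$ by $(\ref{3.7})$, the pressure part contributes $-\frac{A\gamma}{2\mu_1+\lambda_1}\rho^{\gamma-\alpha}|\na\rho|^p$. This is a damping term of size $c\br^{\gamma-\alpha}\|\na\rho\|_{L^p}^p$, and it is precisely what produces the dissipative part $\br^{\gamma-\alpha}\int_0^T\|\na\rho\|_{L^p}^2$ in $\varepsilon_{\rho,2},\varepsilon_{\rho,3}$. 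Working with $\frac{\md}{\md t}\|\na\rho\|_{L^p}^2$ and applying Young's inequality, we get
\[
\frac{\md}{\md t}\|\na\rho\|_{L^p}^2+c\br^{\gamma-\alpha}\|\na\rho\|_{L^p}^2\le C\|\na u\|_{L^\infty}\|\na\rho\|_{L^p}^2+C\br^{2+\alpha-\gamma}\|\na F_1\|_{L^p}^2 .
\]

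Next bound $\|\na F_1\|_{L^p}\le C\|H_1\|_{L^p}$ using $\Delta F_1=\dg H_1$ and $(\ref{3.14})$. Each term of $H_1$ carries a negative power of $\br$. The contributions are:
\begin{itemize}
\item $\br^{1-\alpha}(\|u_t\|_{L^p}+\|u\cdot\na u\|_{L^p})$;
\item $\br^{-1}\|\na\rho\|_{L^p}\|\na u\|_{L^\infty}$, absorbed into the Gronwall factor;
\item $\br^{\beta-\alpha}\|\na^2u\|_{L^p}$;
\item the $w$-terms $\br^{\beta-\alpha}(\|\na w\|_{L^p}+\br^{-1}\|\na\rho\|_{L^3}\|w\|_{L^{\cdots}})$.
\end{itemize}
Here the $A\gamma\rho^{\gamma-\alpha-1}\na\rho$ term in $H_1$ has already been placed in $\na\mathcal P$. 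The quantities $\|\na^2u\|_{L^p}$ and $\|\na u\|_{L^\infty}$ are controlled by $(\ref{3.17})$, $(\ref{3.18})$ and Gagliardo–Nirenberg, $\|\na u\|_{L^\infty}\le C\|\na u\|_{L^2}^{\theta}\|\na^2u\|_{L^q}^{1-\theta}$. These in turn reduce to the already-closed quantities $\varepsilon_{u,w,1}$ and $\varepsilon_{u,w,2}$ from Lemmas \ref{L3}, \ref{L4}, together with $(\ref{3.6})$, $(\ref{3.21})$, $(\ref{3.22})$ and the bootstrap hypotheses $(\ref{3.3})$. For $\|u_t\|_{L^q}$ we interpolate between $\br^{-1/2}\|\sqrt\rho u_t\|_{L^2}$ and $\|\na u_t\|_{L^2}$.

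Then use Gronwall. First check that $\int_0^T\|\na u\|_{L^\infty}\md t$ is $O(1)$, or better small compared with the damping. Failing that, absorb it through $C\|\na u\|_{L^\infty}\le \frac c2\br^{\gamma-\alpha}$ pointwise plus an $L^2_t$ remainder. This gives
\[
\sup_t\|\na\rho\|_{L^p}^2+\br^{\gamma-\alpha}\int_0^T\|\na\rho\|_{L^p}^2\le C\|\na\rho_0\|_{L^p}^2+C\br^{2+\alpha-\gamma}\int_0^T\|H_1\|_{L^p}^2\md t .
\]
Set $N_1,N_2$ to be $2C\|\na\rho_0\|_{L^q}^2$ and $2C\|\na\rho_0\|_{L^2}^2$ (times $\br^{-b}$-independent constants). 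It then remains to verify that every power of $\br$ produced by the right side is strictly below $\br^b$.

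The main obstacle is this index bookkeeping. The $u_t$ term gives
\[
\br^{2+\alpha-\gamma}\,\br^{2-2\alpha}\,\br^{-1}\int\|\sqrt\rho u_t\|^2\lesssim \br^{3-\gamma}.
\]
This is exactly why $b=\max\{3-\gamma,0\}$ appears, and with equality it does not give a gain. So I would try to extract a small factor, either from the $\br^{1-\alpha}$ in the $L^2_t$ bound of $\varepsilon_{u,w,1}$ after interpolation with $\varepsilon_{u,w,2}$ (which costs $\br^{2\alpha-1}\cdot\br^{-\alpha}$), or by enlarging the constant $N_i$ to absorb an $O(\br^b)$ contribution with a fixed multiple. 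The remaining terms close by the conditions already used: the $\br^{\beta-\alpha}$ terms are small since $\beta<\alpha$, the convective terms need $\alpha>\frac{\gamma+1}{2}$, and the $\|\na u\|_{L^\infty}$ terms need $\alpha>\gamma-1$. Finally take $M_4$ large, maximal over all these thresholds together with $M_3$.
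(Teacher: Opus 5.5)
Your route is the paper's: you differentiate the mass equation, replace $\na\dg u$ by $(\na F_1+\na\mathcal P)/(2\mu_1+\lambda_1)$ so that the pressure gives the damping $c\br^{\gamma-\alpha}\|\na\rho\|_{L^p}^2$, control $\|\na F_1\|_{L^p}$ through $H_1$ and (\ref{3.17}), (\ref{3.18}), interpolate $\|u_t\|_{L^q}$ as in (\ref{3.51}), and close with Young and Gronwall. In the Gronwall step, $\int_0^T\|\na u\|_{L^\infty}\md t$ is not bounded uniformly in $T$, so only your fallback works. Young's inequality gives $\frac14\br^{\gamma-\alpha}\|\na\rho\|_{L^p}^2+C\br^{\alpha-\gamma}\|\na u\|_{L^\infty}^2\|\na\rho\|_{L^p}^2$. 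The Gronwall factor is then $O(1)$ because $\int_0^T\|\na u\|_{L^\infty}^2\md t\le C\br^{\gamma-\alpha}$, which the paper derives in (\ref{3.55}) from (\ref{3.53}) and (\ref{3.48}).

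The real gap is the closure, which you leave undecided.

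\emph{Your stated plan fails.} Taking $N_i\propto\|\na\rho_0\|^2$ and showing every power is strictly below $\br^b$ cannot work: as you noticed, the $u_t$ and $w_t$ terms give exactly $C\br^{3-\gamma}$.

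\emph{Your first remedy also fails.} $\varepsilon_{u,w,1}$ only gives $\int_0^T\|\sqrt\rho u_t\|_{L^2}^2\md t\lesssim\br^{\alpha}$. Interpolating with $\varepsilon_{u,w,2}$ is precisely what produces $\br^{3-\gamma}$ in (\ref{3.51}), with no gain.

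\emph{Your second remedy is the paper's.} It sets $N_1=C_7(\|\na\rho_0\|_{L^q}^2+1)$ and $N_2=C_9(\|\na\rho_0\|_{L^2}^2+1)$ in (\ref{3.60}), (\ref{3.65}). It then uses $\br^{3-\gamma}\le\br^b$ and $1\le\br^b$ for $\br\ge1$. The lemma asks only for a bound $N_i\br^b$, not for smallness.

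What you must verify is non-circularity. $C_7$ and $C_9$ may depend on the data and on $N_3,N_4$, but not on $N_1,N_2$; otherwise going from the hypothesis $2N_1\br^b$ to the conclusion $N_1\br^b$ fails. This holds because the bootstrap bound $\|\na\rho\|_{L^q}^2\le 2N_1\br^b$ always enters with an extra negative power of $\br$. For example, inside (\ref{3.55}) it enters as $\br^{2\gamma-2\alpha-2}\int_0^T\|\na\rho\|_{L^q}^2\md t\le 2N_1\br^{\gamma-\alpha-2+b}$. This contributes $2CN_1\br^{b-2}$ to the Gronwall exponent and $2N_1\br^{2-2\alpha+b}$ to the forcing. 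The $\|\na\rho\|_{L^3}$ pieces of the $w$-terms likewise carry $\br^{2(\beta-\alpha)+1-\gamma}$ relative to $\br^b$. All such contributions are absorbed by enlarging $M_4$, not $N_1$.
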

\pf Applying $\na$ to $(\ref{1.1})_1$, using $(\ref{3.16})_1$, we can obtain that
\begin{equation}\label{3.46}
\na\rho_t+u\cdot\na^2\rho+\na u\cdot\na\rho+\na\rho\dg u+\frac{1}{2\mu_1+\lambda_1}\rho(\na F_1+\na \mathcal{P})=0.
\end{equation}
Multiplying the above equation by $|\na\rho|^{q-2}\na\rho$, integrating by parts yields that
\begin{equation}\label{3.47}
\frac{\md}{\md t}\|\na\rho\|_{L^q}^2+\br^{\gamma-\alpha}\|\na\rho\|_{L^q}^2\leq C\|\na u\|_{L^\infty}\|\na\rho\|_{L^q}^2+C\br\|\na F_1\|_{L^q}\|\na\rho\|_{L^q}.
\end{equation}
Choosing $\br\geq\left(4C_3\right)^{\frac{1}{\alpha-\beta}}$ in (\ref{3.18}), we can obtain that
\begin{align}
&\|\nabla^2 u\|_{L^q}+\|\nabla^2 w\|_{L^q}\nonumber\\
&\leq C\br^{-\alpha}\left(\|\rho u_t\|_{L^q}+\|\rho w_t\|_{L^q}\right)\nonumber\\
&\quad+C{\bar{\rho}}^{-\alpha+1}\| u\|_{L^q}\|\nabla u\|_{L^\infty}+C{\bar{\rho}}^{-1}\|\nabla \rho\|_{L^q}\|\nabla u\|_{L^\infty}\nonumber\\
&\quad+C{\bar{\rho}}^{-\alpha+\gamma-1}\|\nabla \rho\|_{L^q}+C{\bar{\rho}}^{-\alpha+1}\| u\|_{L^q}\|\nabla w\|_{L^\infty}
+C{\bar{\rho}}^{-1}\|\nabla \rho\|_{L^q}\|\nabla w\|_{L^\infty}\nonumber\\
&\quad+C\br^{(\beta-\alpha)\frac{1}{\theta}}\left(\|\nabla u\|_{L^2}+\|\nabla w\|_{L^2}\right)+C\bx\br^{\beta-\alpha}\|w\|_{L^q}, \label{3.48}
\end{align}
where $\theta$ is defined in (\ref{3.19}). Combing (\ref{3.48}) with (\ref{3.18}), we can obtain that
\begin{align}
\|\na F_1\|_{L^q}
&\leq C\br^{-\alpha}\left(\|\rho u_t\|_{L^q}+\|\rho w_t\|_{L^q}\right)\nonumber\\
&\quad+C{\bar{\rho}}^{-\alpha+1}\| u\|_{L^q}\|\nabla u\|_{L^\infty}+C{\bar{\rho}}^{-1}\|\nabla \rho\|_{L^q}\|\nabla u\|_{L^\infty}\nonumber\\
&\quad+C{\bar{\rho}}^{\beta-2\alpha+\gamma-1}\|\nabla \rho\|_{L^q}+C{\bar{\rho}}^{-\alpha+1}\| u\|_{L^q}\|\nabla w\|_{L^\infty}
+C{\bar{\rho}}^{-1}\|\nabla \rho\|_{L^q}\|\nabla w\|_{L^\infty}\nonumber\\
&\quad+C\br^{(\beta-\alpha)\frac{1}{\theta}}\left(\|\nabla u\|_{L^2}+\|\nabla w\|_{L^2}\right)+C\bx\br^{\beta-\alpha}\|w\|_{L^q}.\label{3.49}
\end{align}
Inserting (\ref{3.49}) into (\ref{3.47}) gives
\begin{align}
&\frac{\md}{\md t}\|\na\rho\|_{L^q}^2+\br^{\gamma-\alpha}\|\na\rho\|_{L^q}^2\nonumber\\
&\quad\leq C\left(\|\nabla u\|_{L^\infty}+\|\nabla w\|_{L^\infty}\right)\|\na\rho\|_{L^q}^2+C\br^{1-\alpha}\left(\|\rho u_t\|_{L^q}+\|\rho w_t\|_{L^q}\right)\|\na\rho\|_{L^q}\nonumber\\
&\qquad+C\br^{2-\alpha}\|u\|_{L^q}\left(\|\nabla u\|_{L^\infty}+\|\nabla w\|_{L^\infty}\right)\|\na\rho\|_{L^q}+C\br^{\gamma-\alpha+\beta-\alpha}\|\na\rho\|_{L^q}^2\nonumber\\
&\qquad+C\br^{1+(\beta-\alpha)\frac{1}{\theta}}\left(\|\nabla u\|_{L^2}+\|\nabla w\|_{L^2}\right)\|\na\rho\|_{L^q}+C\br^{1+\beta-\alpha}\|w\|_{L^q}\|\na\rho\|_{L^q}\nonumber\\
&\quad\leq \frac{1}{4}\br^{\gamma-\alpha}\|\na\rho\|_{L^q}^2+C_6\br^{\gamma-\alpha+\beta-\alpha}\|\na\rho\|_{L^q}^2+C\br^{\alpha-\gamma}\left(\|\na u\|_{L^\infty}^2+\|\na w\|_{L^\infty}^2\right)\|\na\rho\|_{L^q}^2\nonumber\\
&\qquad+C\br^{2-\gamma-\alpha}\left(\|\rho u_t\|_{L^q}^2+\|\rho w_t\|_{L^q}^2\right)+C\br^{4-\alpha-\gamma}\left(\|\na u\|_{L^\infty}^2+\|\na w\|_{L^\infty}^2\right)\nonumber\\
&\qquad+C\br^{2-\gamma+\alpha+(\beta-\alpha)\frac{2}{\theta}}\left(\|\nabla u\|_{L^2}^2+\|\nabla w\|_{L^2}^2\right)+C\bx\br^{2+2\beta-\gamma-\alpha}\|w\|_{L^q}^2.\label{3.50}
\end{align}

It follows from (\ref{3.3}) that
\begin{align}
\int_0^T\|\rho u_t\|_{L^q}^2\md t&\leq \int_0^T\br^{\frac{5q-6}{2q}}\|\sqrt{\rho}u_t\|_{L^2}^{\frac{6-q}{q}}\|\na u_t\|_{L^2}^{\frac{3(q-2)}{q}}\md t\nonumber\\
&\leq C\br^{\frac{5q-6}{2q}}\left( \int_0^T\|\sqrt{\rho}u_t\|_{L^2}^2\md t\right)^{\frac{6-q}{2q}}\left(\int_0^T\|\na u_t\|_{L^2}^{2}\md t\right)^{\frac{3(q-2)}{2q}}\nonumber\\
&\leq C\br^{\frac{5q-6}{2q}+\alpha{\frac{6-q}{2q}}+(\alpha-1){\frac{3(q-2)}{2q}}}=C\br^{\alpha+1},\label{3.51}
\end{align}
Similarly,
\begin{align}
\int_0^T\|\rho w_t\|_{L^q}^2\md t\leq C\br^{\alpha+1}.\label{3.52}
\end{align}
Note that by Sobolev inequality
\begin{equation}\|\na u\|_{L^\infty}\leq C\|\na u\|_{L^2}^{\theta'}\|\na^2 u\|_{L^q}^{1-\theta'},\quad\|\na w\|_{L^\infty}\leq C\|\na w\|_{L^2}^{\theta'}\|\na^2 w\|_{L^q}^{1-\theta'},\label{3.53}\end{equation}
where
\begin{equation}
\theta'=\frac{2(q-3)}{5q-6}.\label{3.54}
\end{equation}
Combing (\ref{3.48}), (\ref{3.51})$-$(\ref{3.53}) and (\ref{3.3}) gives
\begin{align}
\int_0^T\|\na u\|_{L^\infty}^2\md t&\leq \int_0^T\|\na u\|_{L^2}^{2\theta'}\|\na^2 u\|_{L^q}^{2(1-\theta')}\md t\nonumber\\
&\leq C\int_0^T\left(\|\na u\|_{L^2}^2+\br^{-2\alpha}\left(\|\rho u_t\|_{L^q}^2+\|\rho w_t\|_{L^q}^2\right)\right.\nonumber\\
&\quad+{\bar{\rho}}^{\frac{-2\alpha+2}{\theta'}}\|u\|_{L^q}^{\frac{2}{\theta'}}\left(\|\nabla u\|_{L^2}^2+\|\nabla w\|_{L^2}^2\right)\nonumber\\
&\quad+\br^{-\frac{2}{\theta'}}\|\na\rho\|_{L^q}^{\frac{2}{\theta'}}\left(\|\na u\|_{L^2}^2+ \|\na w\|_{L^2}^2\right)+{\bar{\rho}}^{-2\alpha+2\gamma-2}\|\nabla \rho\|_{L^q}^2\nonumber\\
&\quad+\left.\br^{\frac{2\beta-2\alpha }{\theta'}}\left(\|\nabla u\|_{L^2}^2+\|\nabla w\|_{L^2}^2\right)+\br^{2\beta-2\alpha}\|w\|_{L^q}^2\right)\md t\nonumber\\
&\leq C\br^{1-\alpha}+c\br^{\gamma-\alpha-2+b}+\br^{2\beta-2\alpha}\int_0^T\left(\|w\|_{L^2}^2+\|\na w\|_{L^2}^2\right)\md t\nonumber\\
&\leq C\br^{1-\alpha}+C\br^{\gamma-\alpha-2+b}+\br^{1+\beta-2\alpha}\leq C\br^{\gamma-\alpha},\label{3.55}
\end{align}
due to $\alpha>1,\,\gamma>1,\,b>2$ and $\beta< \alpha$.

Similarly
\begin{equation}
\int_0^T\|\na w\|_{L^\infty}^2\md t\leq C\br^{\gamma-\alpha}.\label{3.56}\end{equation}
We also have the following inequalities:
\begin{align}
\br^{2-\gamma+\alpha+(\beta-\alpha)\frac{2}{\theta}}\int_0^T\left(\|\nabla u\|_{L^2}^2+\|\nabla w\|_{L^2}^2\right)\md t\leq C\br^{3-\gamma+(\beta-\alpha)\frac{2}{\theta}}\leq C\br^{3-\gamma},\label{3.57}\end{align}
\begin{align}
C\bx\br^{2+2\beta-\gamma-\alpha}\int_0^T\|w\|_{L^q}^2\md t&\leq C\bx\br^{2+2\beta-\gamma-\alpha}\int_0^T\left(\|w\|_{L^2}^2+\|\na w\|_{L^2}^2\right)\md t\nonumber\\
&\leq C\br^{3-\gamma+\beta-\alpha}\leq C\br^{3-\gamma},\label{3.58}
\end{align}
due to $\beta<\alpha$. Finally, choosing
$$\br\geq\max\left\{M_3,\,\left(4C_3\right)^{\frac{1}{\alpha-\beta}},\,\left(4C_6\right)^{\frac{1}{\alpha-\beta}}\right\},$$
applying Gr$\rm{\ddot{o}}$nwall inequality, and using(\ref{3.50})$-$(\ref{3.52}), (\ref{3.55})$-$(\ref{3.58}), we have
\begin{align}
&\sup_{t\in [0,T]}\|\na\rho\|_{L^q}^2+\frac{1}{2}\br^{\gamma-\alpha}\int_0^T\|\na\rho\|_{L^q}^2\md t\nonumber\\
&\quad\leq C\left(\|\na \rho_0\|_{L^q}^2+\br^{3-\gamma}+\br^{4-2\alpha}+\br^{3-\gamma-\alpha+\beta}\right)\nonumber\\
&\quad\leq C_7\left(\|\na \rho_0\|_{L^q}^2+\br^{3-\gamma}\right)\leq N_1\br^b,\label{3.59}
\end{align}
due to $\alpha>\frac{\gamma+1}{2},\,\beta<\alpha$ where $b$ satisfies (\ref{3.45}) and
\begin{equation}\label{3.60}
N_1\triangleq C_7\left(\|\na \rho_0\|_{L^q}^2+1\right).
\end{equation}
Similarly, Choosing $\br\geq\left(4C_2\right)^{\frac{1}{\alpha-\beta}}$ in (\ref{3.17}),
\begin{align}
&\|\nabla^2 u\|_{L^2}+\|\nabla^2 w\|_{L^2}\nonumber\\
&\leq C\br^{-\alpha+\frac{1}{2}}\left(\|\sqrt{\rho}u_t\|_{L^2}+\|\sqrt{\rho}w_t\|_{L^2}\right)+C{\bar{\rho}}^{-2\alpha+2}\left(\|\nabla u\|_{L^2}^3+\|\nabla w\|_{L^2}^3\right)\nonumber\\
&\quad+C\br^{-\alpha+\gamma-1}\|\na\rho\|_{L^2}+C\br^{-\frac{q}{q-3}}\|\na\rho\|_{L^q}^{\frac{q}{q-3}}\left(\|\na u\|_{L^2}+\|\na w\|_{L^2}\right)\nonumber\\
&\quad+C\bx\br^{\beta-\alpha}(\|\nabla u\|_{L^2}+\|\nabla w\|_{L^2}+\| w\|_{L^2}).\label{3.61}
\end{align}
Collecting $(\ref{3.3})$ and (\ref{3.61}) gives
\begin{align}
\|\na F_1\|_{L^2}&\leq C\br^{-\alpha+\frac{1}{2}}\left(\|\sqrt{\rho}u_t\|_{L^2}+\|\sqrt{\rho}w_t\|_{L^2}\right)+C{\bar{\rho}}^{-2\alpha+2}\left(\|\nabla u\|_{L^2}^3+\|\nabla w\|_{L^2}^3\right)\nonumber\\
&\quad+C\br^{\beta-2\alpha+\gamma-1}\|\na\rho\|_{L^2}+c\br^{-\frac{q}{q-3}}\|\na\rho\|_{L^q}^{\frac{q}{q-3}}\left(\|\na u\|_{L^2}+\|\na w\|_{L^2}\right)\nonumber\\
&\quad+C\bx\br^{\beta-\alpha}(\|\nabla u\|_{L^2}+\|\nabla w\|_{L^2}+\|w\|_{L^2})\nonumber\\
&\leq C\br^{-\alpha+\frac{1}{2}}\left(\|\sqrt{\rho}u_t\|_{L^2}+\|\sqrt{\rho}w_t\|_{L^2}\right)+C{\bar{\rho}}^{-2\alpha+2}\left(\|\nabla u\|_{L^2}+\|\nabla w\|_{L^2}\right)\nonumber\\
&\quad+C\br^{\beta-2\alpha+\gamma-1}\|\na\rho\|_{L^2}+C\br^{\left(\frac{b}{2}-1\right){\frac{q}{q-3}}}\left(\|\na u\|_{L^2}+\|\na w\|_{L^2}\right)\nonumber\\
&\quad+C\bx\br^{\beta-\alpha}(\|\nabla u\|_{L^2}+\|\nabla w\|_{L^2}+\| w\|_{L^2}).\label{3.62}
\end{align}
Using (\ref{3.54}), we have
\begin{align}
&\frac{\md}{\md t}\|\na\rho\|_{L^2}^2+\br^{\gamma-\alpha}\|\na\rho\|_{L^2}^2\nonumber\\
&\quad\leq C\|\na u\|_{L^\infty}\|\na\rho\|_{L^2}^2+C\br\|\na F_1\|_{L^2}\|\na\rho\|_{L^2}\nonumber\\
&\quad\leq \frac{1}{4}\br^{\gamma-\alpha}\|\na\rho\|_{L^2}^2+C\br^{\alpha-\gamma}\|\na u\|_{L^\infty}^2\|\na\rho\|_{L^2}^2+C\br^{2-\gamma+\alpha}\|\na F_1\|_{L^2}^2\nonumber\\
&\quad\leq \frac{1}{4}\br^{\gamma-\alpha}\|\na\rho\|_{L^2}^2+C_{8}\br^{\gamma-3\alpha+2\beta}\|\na\rho\|_{L^2}^2+C\br^{\alpha-\gamma}\|\na u\|_{L^\infty}^2\|\na\rho\|_{L^2}^2\nonumber\\
&\qquad+C\br^{3-\gamma-\alpha}\left(\|\sqrt{\rho} u_t\|_{L^2}^2+\|\sqrt{\rho} w_t\|_{L^2}^2\right)+C\br^{6-3\alpha-\gamma}\left(\|\nabla u\|_{L^2}^2+\|\nabla w\|_{L^2}^2\right)\nonumber\\
&\qquad+C\bx\br^{2-\gamma+\beta-\alpha}\br^\beta\|w\|_{L^2}^2.\label{3.63}
\end{align}
Choosing
$$\br\geq\max\left\{M_3,\,\left(4C_2\right)^{\frac{1}{\alpha-\beta}},\,\left(4C_{8}\right)^{\frac{1}{\alpha-\beta}}\right\},$$
combining (\ref{3.61}), (\ref{3.62}) and (\ref{3.3}), and applying Gr${\rm\ddot{o}}$nwall inequality to (\ref{3.63}) yields that
\begin{align}
&\sup_{t\in [0,T]}\|\na\rho\|_{L^2}^2+\frac{1}{2}\br^{\gamma-\alpha}\int_0^T\|\na\rho\|_{L^2}^2\md t\nonumber\\
&\quad\leq C\left(\|\na \rho_0\|_{L^2}^2+\br^{3-\gamma}+\br^{3-\gamma+4-4\alpha}+\br^{3-\gamma+\beta-\alpha}\right)\nonumber\\
&\quad\leq C_{9}\left(\|\na \rho_0\|_{L^2}^2+\br^{3-\gamma}\right)\leq N_2\br^b\label{3.64}
\end{align}
due to $\alpha>1$ and $\beta<\alpha$, where
\begin{equation}\label{3.65}
N_2\triangleq C_{9}\left(\|\na \rho_0\|_{L^2}^2+1\right).\end{equation}

Thus letting
$$M_4\triangleq \max\left\{M_3,\,\left(4C_2\right)^{\frac{1}{\alpha-\beta}},\,\left(4C_3\right)^{\frac{1}{\alpha-\beta}},\,\left(4C_6\right)^{\frac{1}{\alpha-\beta}},\,\left(4C_{8}\right)^{\frac{1}{\alpha-\beta}}\right\}$$
and choosing $\br> M_4$, (\ref{3.59}) and (\ref{3.64}) are satisfied. The proof of Lemma \ref{L5} is completed.\hfill $\square$

\textit{Proof of Proposition} 3.1. It only suffices to prove that
\begin{align}
\varepsilon_{\rho,1}(T)&=\sup_{t\in [0,T]}\|\rho-\br\|_{L^\infty}\nonumber\\
&\leq \sup_{t\in [0,T]}\|\rho-\br\|_{L^2}^{\frac{2(q-3)}{5q-6}}\|\na \rho\|_{L^q}^{\frac{3q}{5q-6}}\nonumber\\
&\leq C\br^{{\frac{2(q-3)}{5q-6}}{\frac{3-\gamma}{2}}+{\frac{3q}{5q-6}}\frac{b}{2}}\nonumber\\
&\leq C_{10}\br^{{\frac{2(q-3)}{5q-6}}+{\frac{3q}{5q-6}}\frac{b}{2}}\leq \frac{\br}{4},\label{3.66}
\end{align}
provided that
\begin{equation}
\br\geq \Lambda_0\triangleq\max\left\{M_4,\,\left(4C_{10}\right)^{\frac{2(5q-6)}{3q(2-b)}}\right\}\label{3.67}
\end{equation}
and $$b<2.$$

Finally, choosing $b$ as in (\ref{3.2}). If
$$\alpha>1,\,\,\gamma>1,\,\,2\alpha>\gamma+1,$$
and
$$\gamma-1\leq \alpha,$$
$b$ satisfies (\ref{3.20}) and (\ref{3.45}) automatically. Hence, (\ref{3.4}) follows from Lemmas \ref{L3}$-$\ref{L5}, and the proof of Proposition \ref{p} is finished. \hfill $\square$
\section{Proof of Theorem \ref{th1}}
By Lemma \ref{lem2.1}, there exists a positive time $T_{*}$ such that the Cauchy problem $(\ref{1.1})-(\ref{1.5})$ has a strong solution $(\rho, u, w)$ in $\mathbb{R}^3\times (0, T_{*}]$. It follows from (\ref{1.6})) that there exists a $T_1 \in (0,T_{*}] $ such that (\ref{3.3}) holds for $T = T_1$. So, to prove Theorem \ref{th1}, it suffices to prove that the local solutions can be extended to be a global one. To this end, we assume from now on that $\br\geq \Lambda_0$ holds with $\Lambda_0$ being defined in Proposition {\ref{p}.

Set
\begin{equation}\label{4.1}
T^*=\sup\left\{T|(\rho,u,w) {\rm \,\,is\,\, a\,\, srtong\,\, solution\,\, on\,\,} \mathbb{R}^3\times[0,T]\,\,and\,\,(\ref{3.3})\,\,holds\right\}.
\end{equation}
Then $T^*\geq T_1>0$. We claim that
\begin{equation}\label{4.2}
T^*=\infty.
\end{equation}
In fact, if $T^*<\infty$. By virtue of Proposition \ref{p}, it holds that $(\rho,u,w)|_{t=T^*}$ satisfies (\ref{1.6}) and (\ref{3.3}) holds for $T=T^*$ . Thus, Lemma \ref{lem2.1} implies that there exists $T^{**}>T^*$ such that $(\rho,u,w)$ can be extended to be a strong strong solution to(\ref{1.1})$-$(\ref{1.5}) in $\mathbb{R}^3\times (0, T_{**}]$, which contradicts (\ref{4.1}). Hence (\ref{4.2}) holds.

\section{Proof of Remark \ref{re5}}
The key point of the proof of Remark \ref{re5} is the $L^p$ estimates of elliptic system. For convenience, we set $\lambda_1(\rho)=\lambda_1$ and $\lambda_2(\rho)=\lambda_2$ to be constants and  satisfy
\begin{align*}
\mu_1>0,\,\,2\mu_1+3\lambda_1\rho^{-\alpha}\geq 0,\\
\mu_2>0,\,\,2\mu_2+3\lambda_2\rho^{-\alpha}\geq 0.
\end{align*}

Next, rewrite $(\ref{1.1})_2$ and $(\ref{1.1})_3$ as
\begin{equation}\label{5.1}
\left\{\begin{aligned}
&\nabla\left(\left(2\mu_1+\lambda_1\rho^{-\alpha}\right)\dg u\right)-\mu_1\cl\cl u=\nabla \mathcal{P}+G_1,\\
&\nabla\left(\left(2\mu_2+\lambda_2\rho^{-\alpha}\right)\dg w\right)-\mu_2\cl\cl w=G_2,
\end{aligned}\right.
\end{equation}
where
\be\label{5.2}
\nabla \mathcal{P}=\left\{
\begin{aligned}
&\frac{A\gamma}{\gamma-\alpha}\nabla\rho^{\gamma-\alpha},\quad&\gamma\neq\alpha,\\
&A\gamma\nabla{\rm ln}\rho,\quad&\gamma=\alpha,
\end{aligned}
\right.
\ee
and
\begin{align}
G_1\triangleq&\rho^{1-\alpha} \dot u-2\mu_1\alpha\rho^{-1}\nabla\rho\otimes d(u)-\rho^{-\alpha}\dg (2\bx\rho^\beta d(u))+\rho^{-\alpha}\cl(\bx\rho^\beta\cl u) \nonumber\\
&+A\gamma\rho^{\gamma-\alpha-1}\nabla\rho-2\bx\rho^{-\alpha}\cl\left(\rho^{\beta}w\right)-\lambda_1\alpha\rho^{-\alpha-1}\nabla\rho\dg u,\label{5.3}\\
G_2\triangleq&\rho^{1-\alpha}\dot{w}-\mu_2\alpha\rho^{-1}\nabla\rho\cdot\nabla w-\mu_2\alpha\rho^{-1}\nabla\rho\dg w\nonumber\\
&-2\bar{\xi}\rho^{\beta-\alpha}\cl u+4\bar{\xi}\rho^{\beta-\alpha} w-\lambda_2\alpha\rho^{-\alpha-1}\nabla\rho\dg w.\label{5.4}
\end{align}

Similar with above proofs, we can construct the elliptic equations
\begin{equation}\label{5.5}\begin{aligned}
\Delta K_1=\dg G_1,\quad \Delta V_1=\cl G_1,\\
\Delta K_2=\dg G_2,\quad \Delta V_2=\cl G_2,
\end{aligned}\end{equation}
where $K_1,\,K_2,\,V_1,\,V_2$ satisfying
\begin{equation}\label{5.6}\left\{
\begin{aligned}
&K_1=\left(2\mu_1+\lambda_1\rho^{-\alpha}\right)\dg u-\mathcal{P}(\rho)+\mathcal{P}({\bar{\rho}}),\quad &V_1=\mu_1\cl u,\\
&K_2=\left(2\mu_2+\lambda_2\rho^{-\alpha}\right)\dg w,\quad &V_2=\mu_2\cl w.
\end{aligned}\right.\end{equation}

Applying the standard $L^p$-estimates to the elliptic system (\ref{5.5}), we can prove that (\ref{3.17}) and (\ref{3.18}) hold true also in the similar method in section 3 due to (\ref{3.7}). Then we can prove the Remark \ref{re5} step by step similar as in Section 3 and Section 4.
\\\\
{\bf Conflict of interest}\\
This work does not have any conflicts of interest.
\\{\bf Data availability}
\\
Data sharing not applicable to this article as no datasets were generated or analysed during the current study.

\end{document}